\newcommand{\cri}{\\ \indent}
\newtheoremstyle{introThm}%
     {\topsep}%  space above
     {\topsep}%  space below
     {\itshape}%         Body font
     {}%         Indent amount (empty = no indent, \parindent = para indent)
     {\bfseries}% Thm head font
     {}%        Punctuation after thm head
     {\newline}%     Space after thm head (\newline = linebreak)
     {\thmname{#1} \thmnumber{#2} \thmnote{ #3}}%         Thm head spec
\newtheoremstyle{externalThm}%
     {\topsep}%  space above
     {\topsep}%  space below
     {\itshape}%         Body font
     {}%         Indent amount (empty = no indent, \parindent = para indent)
     {\bfseries}% Thm head font
     {}%        Punctuation after thm head
\newtheoremstyle{hypothesis}%
     {\topsep}%  space above
     {\topsep}%  space below
     {}%         Body font
     {}%         Indent amount (empty = no indent, \parindent = para indent)
     {\bfseries}% Thm head font
     {}%        Punctuation after thm head
     { }%     Space after thm head (\newline = linebreak)
     {\thmname{#1}\thmnumber{#2} \thmnote{ #3}}%         Thm head spec
\newtheorem{new}{Propositions}[section]
\newtheorem{thm}[new]{Theorem}
\newtheorem{prop}[new]{Proposition}
\newtheorem{lemma}[new]{Lemma}
\newtheorem{cor}[new]{Corollary}
\newtheorem*{unnumberedThm}{Theorem}
\newtheorem*{unnumberedCor}{Corollary}
\theoremstyle{remark}
\newtheorem{rmk}[new]{Remark}
\theoremstyle{externalThm}
\newtheorem{externalTheorem}{Theorem}
\theoremstyle{hypothesis}
\newtheorem*{unnumberedHyp}{H}
\newcommand{\ie}{i.e.\ }
\newcommand{\eg}{e.g.\ }
\newcommand{\af}{\emph{a fortiori}}
\newcommand{\caveat}{\emph{Caveat}}
\newcommand{\bsc}{BSC}
\newcommand{\Hess}{\nabla\sp{2}}
\newcommand{\Hu}{\Hess u}
\newcommand{\diff}{\nabla}
\newcommand{\J}{\diff}
\newcommand{\ball}[2]{B(#1, #2)}
\newcommand{\clball}[2]{\overline{B}(#1, #2)}
\newcommand{\z}{z\sb0}
\newcommand{\N}{\mathbb{N}}
\newcommand{\R}{\mathbb{R}}
\newcommand{\Rr}{\R\sp{2}}
\newcommand{\Rn}{\R\sp{N}}
\newcommand{\Rhc}{[0,+\infty)}
\newcommand{\Rho}{(0,+\infty)}
\newcommand{\wq}{W\sp{1,q}(\Omega)}
\newcommand{\wqcs}{W\sp{1,q}\sb0(\Omega)}
\newcommand{\wi}{W\sp{1,\infty}(\Omega)}
\newcommand{\wss}{W\sp{2,2}}
\renewcommand{\phi}{\varphi}
\newcommand{\fun}[3]{#1\!:\,#2 \rightarrow {#3}}
\newcommand{\jj}{\mathcal{J}}
\newcommand{\ja}{\mathcal{J}\sb{\a}}
\newcommand{\f}{f}
\newcommand{\g}{g}
\newcommand{\fb}{\f'}
\newcommand{\vv}{p}
\renewcommand{\a}{n}
\newcommand{\ainv}{\tfrac{1}{\a}}
\newcommand{\fa}{\f\sb{\a}}
\newcommand{\ga}{\g\sb{\a}}
\newcommand{\adom}{\a\in\N}
\newcommand{\ua}{u\sb{\a}}
\newcommand{\va}{v\sb{\a}}
\newcommand{\Aa}{A\sb{\a}}
\newcommand{\Ba}{B\sb{\a}}
\newcommand{\aaa}{\alpha\sb{\a}}
\newcommand{\aaainfty}{\alpha\sb{\infty}}
\newcommand{\aaalim}{\alpha}
\newcommand{\bbalim}{\beta}
\newcommand{\gm}[1]{|\diff #1|}
\DeclareMathOperator{\dv}{div}
\DeclareMathOperator{\lp}{\Delta}
\DeclareMathOperator{\ii}{\mathrm{I}}
\DeclareMathOperator{\tr}{\mathrm{tr}}
\def\NN{\mathbb N}
\def\RR{\mathbb R}
\def\RN{\RR^N}
\def\Om{\Omega}
\def\pa{\partial}
\def\cA{\mathcal A}
\def\cB{\mathcal B}
\def\cK{\mathcal K}
\def\cJ{\mathcal J}
\def\cU{\mathcal U}
\def\de{\delta}
\def\pa{\partial}
\def\fhi{\varphi}
\def\tr{\mathrm{tr }}
\def\dv{\mathrm{div\,}}
\def\na{\nabla}
\def\ints{\int\limits}
\def\ds{\displaystyle}
\author{Simone Cecchini \footnotemark[1]{\ \,}\footnotemark[2] \and
  Rolando Magnanini \footnotemark[3]}
\newenvironment{acknowledgements}%
{\textbf{\large Acknoledgements. }}%
{}
\title{Critical Points of Solutions of Degenerate Elliptic Equations
  in the Plane}
\begin{document}
\renewcommand{\thefootnote}{\fnsymbol{footnote}}
\footnotetext[1]{%Simone Cecchini,
          Dipartimento di Matematica U.~Dini, %
          Universit\` a di Firenze, viale Morgagni 67/A, 50134
          Firenze, Italy.\ %
          %Email: simone.cecchini@math.unifi.it.it
}
\footnotetext[2]{%Simone Cecchini,
          Current Address: Dipartimento di Scienze e Metodi dell'Ingegneria, 
          Universit\`a degli Studi di Modena e Reggio Emilia,
          via G. Amendola, 2 - Pad. Morselli - 42100 Reggio Emilia, Italy.\ %
          Email: simone.cecchini@unimore.it}
\footnotetext[3]{%Rolando Magnanini,
          Dipartimento di Matematica U.~Dini, %
          Universit\` a di Firenze, viale Morgagni 67/A, 50134
          Firenze, Italy.\ %
          Email: rolando.magnanini@math.unifi.it}
\maketitle

\renewcommand{\b}{\ball{\z}{R}}
\newcommand{\bb}{\ball{\z}{r}}
\newcommand{\cbb}{\clball{\z}{r}}
\newcommand{\bh}{\ball{\z}{r}\setminus\{\z\}}
\newcommand{\B}{{B\sb{\varepsilon}}}
\newcommand{\vuz}{U\sb0}
\newcommand{\dx}{dx}
\newcommand{\dy}{dy}
\newcommand{\dw}{d\vartheta}
\renewcommand{\ds}{ds}
\newcommand{\dt}{dt}
\newcommand{\s}{\mathbb{S}\sp1}
\newcommand{\A}{\mathrm{A}}
\newcommand{\Rs}{\R\sp2}
\newcommand{\W}{\mathrm{W}\sp{1,2}}
\newcommand{\WW}{\mathrm{W}\sp{2,2}}
\newcommand{\ci}[1]{C\sb0\sp{\infty}(#1)}
\newcommand{\ls}{\mathrm{L}\sp2}
\newcommand{\T}[1]{\tau\sb{#1}}
\renewcommand{\tt}{\T{h}}
\newcommand{\e}{\mathrm{e}\sb s}
\newcommand{\id}{\mathbb{I}d}
\newcommand{\wc}{\subset\subset}
\newcommand{\annular}{A}
\newcommand{\brr}{\annular\sp{r\sb2}\sb{r\sb1}}
\newcommand{\vu}{\dfrac{\diff u}{|\diff u|}}
\newcommand{\D}[1]{\mathcal{D}(#1)}
\newcommand{\cF}{\mathcal{F}}
\newcommand{\bp}{B\sp\prime}
\renewcommand{\b}{B}
\renewcommand{\phi}{\varphi}
\newcommand{\spt}{\mathrm{supp}}
\newcommand{\ele}{Euler-Lagrange equation}
\newcommand{\domj}{ \textrm{domain of }\jj}
\newcommand{\multa}{\sigma\sb{\a}}
\newcommand{\sym}{\mathcal{S}\sp n}
\newcommand{\Hphi}{\Hess\phi}
\newcommand{\x}{\hat{x}}
\renewcommand{\B}{B}
\newcommand{\br}{\B\sb{r}}
\newcommand{\bt}{\B\sb{t}}

%\setcounter{page}{1}

%\vskip.2cm

%\begin{keywords}
%Partial differential equations,
%B\"acklund trans\-forma\-tions, convex functionals, minimizers,
%free boundaries, critical points, variational solutions, viscosity
%solutions.
%\end{keywords}

%\begin{AMS}
%Primary 35J70, 35Q60; Secondary 49N60.
%\end{AMS}

%\pagestyle{myheadings}
\thispagestyle{plain}
%\markboth{S. CECCHINI AND R.~MAGNANINI}{}

%%%%%%%%%%%%%%%%%%%%%%%%%%
%%% 1. Introduction %%%%%%
%%%%%%%%%%%%%%%%%%%%%%%%%%

\begin{abstract}
%   This paper concerns the study of 
%   minimizers of some non-G\^ateaux differentiable convex
%   functionals in the plain. Namely, we show that the critical points
%   set of the related minimizer can not be isolated, and determine an
%   Euler-Lagrange equation by means of viscosity solutions to some
%   appropriate approximating functionals. \cri
%   As an application of the latter of the above-mentioned results, we
%   consider a generalized, fully nonlinear form of the Cauchy-Riemann
%   system and prove an existence result.
  We study the minimizer $u$ of a convex functional in the plane which
  is not G\^ateaux-differentiable. Namely, we show that the set of
  critical points of any $C\sp1$-smooth minimizer can not have
  isolated points. Also, by means of some appropriate approximating
  scheme and viscosity solutions, we determine an Euler-Lagrange
  equation that $u$ must satisfy.
  By applying the same approximating scheme, we can pair $u$ with a
  function $v$ which may be regarded as the stream function of $u$ in
  a suitable generalized sense.
\end{abstract}

\newcommand{\keywords}[1]{{\noindent\scriptsize{\textbf Keywords:}\ #1}}
\newcommand{\subclass}[1]{{\noindent\scriptsize{\textbf MSC:}\ #1}}

\begin{center}\begin{minipage}[t]{0.85\linewidth}
\keywords{Quasi-linear degenerate elliptic partial differential
  equations, critical points of solutions, stream functions,
  non-differentiable variational functional.}\\
\subclass{35B05,35B38,35J20,35J60.}
\end{minipage}\end{center}

\section{Introduction}
\label{sec 1}

\subsection{Motivations: a case study.} 
This paper will mainly focus on the properties of certain convex coercive
{\it non-differentiable} functionals and their extremals. We are partly 
motivated by the investigations that the second author and G. Talenti 
pursued in a series of papers
\cite{magnaniniTalenti1}-\cite{magnaniniTalenti3} about {\it
  complex-valued} 
solutions of the classical {\it eikonal equation} in the plane.
\par
One of the main charachters acting in those papers is the functional
\begin{equation}
\label{defJ}
\jj(u)=\ints_\Om f(|\na u|)\, dx,
\end{equation}
where $\Om$ is a bounded domain in $\RN$ (therein, with $N=2$),
$|\na u|$ denotes the modulus of the gradient of a scalar function $u$
defined in $\Om$ and
\begin{equation}
\label{deff}
f(\rho)=\frac12 \left[\rho\sqrt{1+\rho^2}+\log\left(\rho+\sqrt{1+\rho^2}
\right)\right]
\ \mbox{ for } \ \rho\ge 0.
\end{equation}
\par
Notice that $f$ is strictly convex and grows quadratically at infinity,
thus the existence and uniqueness of a function minimizing $\cJ$ 
subject to a Dirichlet boundary condition is not hard to prove.
However, since $f'(0)>0,$ $\cJ$ is not always differentiable 
--- non-differentiability occurring when the Lebesgue measure of the 
set $\{x\in\Om: \na u(x)=0\}$ is positive  --- and
a standard Euler-Lagrange equation may not be available for $\cJ,$
though a differential inclusion
$$
0\in\pa\cJ(u),
$$
by means of the subdifferential $\pa\cJ,$ still characterizes a minimizing $u$
(see \cite{magnaniniTalenti2} for details). A formal Euler-Lagrange
equation would read  
\begin{equation}
\label{divFormForU}
\dv\left\{ \fb(|\na u|)\,\frac{\na u}{|\na u|}\right\}=0
\end{equation}
with $f'(\rho)=\sqrt{1+\rho^2}$ --- a clearly singular equation exactly at the {\it critical points} of $u.$ 
Nevertheless, away from its critical points, a smooth minimizer $u$ certainly
satisfies the quasilinear {\it elliptic degenerate} differential equation
\begin{equation}
\label{nondivu}
\tr [\cA(\na u)\,\na^2 u]=0
\end{equation}
(degeneration occurring, of course, at critical points), where the
$N\times N$ matrix $A(p)$ has coefficients 
$$
\cA_{ij}(p)=\big[\alpha(|p|)-1\big]p_ip_j + \,|p|^2\de_{ij},
\quad i, j=1,\dots, N
$$
% $$
% \cA_{ij}(p)=\frac{|p|^2 f'(|p|)}{|p| f''(|p|)-f'(|p|)}\,\de_{ij}+ p_i\, p_j,
% \quad i, j=1,\dots, N
% $$
Here, $\de_{ij}$ is the usual Kronecker's delta, while
\begin{equation}\label{alphaIntro}
  \alpha(\rho) = %
  \begin{cases}
    \dfrac{\rho f''(\rho)}{f'(\rho)}, & \rho>0, \\
    0, & \rho=0.
  \end{cases}
\end{equation}
For $N=2$ and $f$ given by \eqref{deff}, Equation \eqref{nondivu} may
be formally re-written as
\begin{equation}
\label{nondivu2}
-\left(|\na u|^4+u_y^2\right)\, u_{xx}+2\, u_x\, u_y\, u_{xy}
-\left(|\na u|^4+u_x^2\right)\, u_{yy}=0.
\end{equation}
\par
Functional \eqref{defJ} and equations \eqref{divFormForU}, \eqref{nondivu} show
some interesting features.
\par
Even if, as already mentioned, minimizers of $\cJ$ do not satisfy \eqref{divFormForU} in general,
it is proved in \cite{magnaniniTalenti2} that they are {\it viscosity
  solutions} of \eqref{nondivu2}.  
In \cite{magnaniniTalenti2}, it is also shown that {\it classical}
solutions of \eqref{nondivu2} 
exist which {\it can not} be minimizers of $\cJ$ --- thus proving that the Dirichlet problem
for viscosity solutions of \eqref{nondivu2} is not uniquely solvable.
%Among  solutions of \eqref{nondivu2}, variational solutions 
%seem to have a special status.
\par
Another interesting feature concerns the set of critical points of solutions of \eqref{nondivu2} ---
a decisive information for a good understanding of the properties of \eqref{defJ}, \eqref{divFormForU} and \eqref{nondivu}.
For $N=2,$ sample solutions of \eqref{nondivu2} have their gradients which vanish on a
set of {\it positive} Lebesgue measure (\cite{magnaniniTalenti1});
also, it has been shown (\cite{magnaniniTalenti1}) 
that classical solutions of \eqref{nondivu2} cannot have isolated (non-degenerate) critical points,
that is their gradients either never vanish or annihilate on a continuum ---
a property not occurring for other well-known degenerate elliptic equations (e.g. the $p$-Laplace
equation).
\par
This phenomenon may be heuristically explained by observing that $f(\rho)$
grows {\it only linearly} near $\rho=0,$ forcing the gradient of a minimizer
to be ``smaller than usual'' wherever it is possible. Also, a simple
inspection informs us that the operator in \eqref{divFormForU} behaves like the
$1$-laplacian near critical points and the ordinary laplacian for large
values of $|\na u|.$ This set of remarks make us claim that
equations \eqref{divFormForU} and \eqref{nondivu} are ``more degenerate''
than the $p$-Laplace equation for $1<p<\infty$ but
``less degenerate'' than the $1$-Laplace equation, and
for this reason they deserve attention.
\par
Let us finally observe that functionals and equations with a structure
similar to the one described in this subsection have been
considered in the study of torsional creep problems
in {\it elasto-plastic} materials (\cite{Kawohl}, \cite{langenbach}, \cite{philippin},
\cite{paynePhilippin}). 
\par 
The aforementioned reasons motivate our interest on a
more detailed analysis of such functionals and equations.

\subsection{Main results.}
We shall consider a {\it strictly convex} functional of type \eqref{defJ}.
From now on, unless differently specified, $\Om$ will be a {\it bounded}
domain in $\RN$, while $\f$ is assumed to abide to the requirements
below:
\begin{subequations}\label{eq:cond:f}
\begin{equation}
  \label{eq:cond:fConvex}
  \f \text{ is strictly convex};
\end{equation}
\begin{equation}
  \label{eq:cond:fRegularity}
  \f\in C\sp{1}\big(\Rhc\big)\cap C\sp{2,\lambda}\sb{loc}\big(\Rho\big), %
                      \ 0<\lambda<1;
\end{equation}
\begin{equation}
  \label{eq:cond:fPrime}
  \f'(0)>0.
\end{equation}
\end{subequations}

In order to avoid technicalities, unnecessary to the aims of our
investigation, we require that the couple $(\Om, \psi),$ where $\psi:\pa\Om\to\RR$
is a given continuous function, satisfies a {\it bounded slope condition}
(referred to by \bsc{} from now on; see Section 3 for details). 
\par
Under these assumptions, a classical result makes sure that the variational problem
\begin{equation}
\label{varpb}
\min\{\cJ(w): w\in \mathrm{Lip}(\overline{\Om}),\, w=\psi \mbox{ on } \pa\Om\},
\end{equation}
$\mathrm{Lip}(\overline{\Om})$ being the space of Lipschitz continuous functions in
$\overline{\Omega}$,  
admits a unique solution (see e.g. \cite{giustiDirectMethods}).
\par
In Theorem \ref{mainResultForU}, we
specify sufficient conditions on $f$ 
that guarantee that each solution $u$ of \eqref{varpb} is a viscosity solution of \eqref{nondivu}
subject to $u=\psi$ on $\pa\Om.$ The proof of Theorem
\ref{mainResultForU} follows the outline of the one
given in \cite{magnaniniTalenti2} for the special case
\eqref{deff}: we uniformly approximate $\cJ$ by 
a sequence of strictly convex differentiable functionals 
\begin{equation}
  \label{eq:approxFunctionalForU}
  \cJ\sb{\a}(u)=\ints_\Om f\sb{\a}(|\na u|)\, dx 
\end{equation}
whose minimizers $u\sb{\a}$ are proven to 
be viscosity solutions of some relevant differential equations with
coefficients that converge uniformly 
to those of \eqref{nondivu}. Differently from
\cite{magnaniniTalenti1}, the uniform convergence of the $u\sb{\a}$'s,
needed to use the standard 
stability result of \cite[Section 6]{viscosityUserGuide}, is easily
obtained by means of the \bsc. 
\par
The main result of this paper concerns the set of critical points of a solution of \eqref{varpb}.
\begin{thm}
  \label{minimizer:hasNoIsolatedCriticalPoint}
  Let $u$ be a $C\sp1$ solution of \eqref{varpb}, where $\f$ satysfies
  \eqref{eq:cond:f}.\cri 
  Then $u$ can not have isolated critical points.
\end{thm}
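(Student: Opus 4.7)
I argue by contradiction. Suppose $u$ is a $C^1$ minimizer of $\jj$ with an isolated critical point $z_0 \in \Omega$, and choose $R > 0$ so that $\overline{B}(z_0, R) \subset \Omega$ and $|\nabla u| > 0$ on $B \setminus \{z_0\}$, where $B := B(z_0, R)$. Set $c := u(z_0)$. On $B \setminus \{z_0\}$, the strict convexity of $f$ together with $|\nabla u| > 0$ imply that $u$ satisfies the non-degenerate Euler-Lagrange equation $\dv[f'(|\nabla u|)\nabla u/|\nabla u|] = 0$ classically, and standard elliptic regularity gives $u \in C^2(B \setminus \{z_0\})$.

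My first step is to exclude the case in which $z_0$ is a local extremum of $u$. By the isolation of $z_0$, such an extremum is automatically strict, since otherwise another critical point would arise arbitrarily close to $z_0$. Suppose then that $z_0$ is a strict local minimum, so $u > c$ on $\overline{B}(z_0, R') \setminus \{z_0\}$ for some $R' \leq R$; set $m := \min_{\partial B(z_0, R')} u > c$ and fix $\epsilon \in (0, m-c)$. The function
\[
w(z) := \begin{cases} \max\{u(z), c+\epsilon\}, & z \in B(z_0, R'),\\ u(z), & z \in \Omega \setminus B(z_0, R'), \end{cases}
\]
is Lipschitz and agrees with $u$ on $\partial\Omega$, hence is an admissible competitor. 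Its energy satisfies
\[
\jj(w) - \jj(u) = \int_{\{u < c+\epsilon\} \cap B(z_0, R')} [f(0) - f(|\nabla u|)]\, dx < 0,
\]
because $f$ is strictly increasing (by convexity and $f'(0) > 0$) and $|\nabla u| > 0$ almost everywhere on the non-empty open set of integration. This contradicts the minimality of $u$; the strict local maximum case is symmetric.

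It remains to handle the case in which $u - c$ changes sign in every neighborhood of $z_0$. My plan is to introduce the stream function $v$ on $B \setminus \{z_0\}$, defined by $\nabla^\perp v = f'(|\nabla u|)\nabla u/|\nabla u|$. The right-hand side is bounded and divergence-free on the punctured disk, and its flux across $\partial B(z_0, r)$ is bounded by $2\pi r\, \|f'(|\nabla u|)\|_{L^\infty}$; since this flux is constant in $r$ by the divergence theorem on annuli, it must vanish. Therefore $v$ is single-valued and extends to a Lipschitz function on $B$ with $|\nabla v| = f'(|\nabla u|) \geq f'(0) > 0$ almost everywhere, and the orthogonality $\nabla v \perp \nabla u$ yields $dv/ds = f'(|\nabla u|)$ along every arc-length-parametrized level curve of $u$. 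I aim to produce a contradiction by combining this identity with a variational truncation: a competitor of the form $w = \phi(u)$, where $\phi$ flattens the interval $[c-\eta/2, c+\eta/2]$ to the constant $c$ and is the identity outside $[c-\eta, c+\eta]$, satisfies $|\nabla w| \leq |\nabla u|$ pointwise and $|\nabla w| = 0$ on a set of positive measure containing $z_0$; strict monotonicity of $f$ then yields $\jj(w) < \jj(u)$, provided $[c-\eta, c+\eta]$ avoids $\psi(\partial\Omega)$, so that $w = u$ on $\partial\Omega$. The main obstacle is the delicate subcase $c \in \psi(\partial\Omega)$, in which a more refined construction localizing the modification away from the boundary (or an alternative argument exploiting the regularity of $v$ at $z_0$ via the dual equation $\dv[g(|\nabla v|)\nabla v/|\nabla v|] = 0$ with $g = (f')^{-1}$) is required.
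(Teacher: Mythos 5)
Your first step — excluding the case that $z_0$ is a local extremum by the truncation competitor $w=\max\{u,c+\epsilon\}$ — is correct, and is in fact a pleasant elementary alternative to the paper's argument, which rules out the same case by noting that a level curve encircling $z_0$ would give the $1$-form $\omega$ a nonzero period, contradicting its closedness. Your setup of the stream function $v$ on the punctured disc, the vanishing of the flux, the Lipschitz extension of $v$ to $B$, and the key observation $|\nabla v|=f'(|\nabla u|)\ge f'(0)>0$ all match Lemma \ref{v:existence}.

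The second step, however, has a genuine gap: the competitor $w=\phi(u)$ does not do what you claim. A Lipschitz $\phi$ that is constant on $[c-\eta/2,c+\eta/2]$ and equals the identity outside $[c-\eta,c+\eta]$ must, by the mean value theorem, satisfy $\phi'(\xi)=2$ for some $\xi\in(c+\eta/2,c+\eta)$ (and again on the other side), so the asserted inequality $|\nabla w|=|\phi'(u)|\,|\nabla u|\le|\nabla u|$ fails on the transition annuli $\{\eta/2<|u-c|<\eta\}$. The energy comparison is therefore not established: the flattening produces a savings of order $\int_{\{|u-c|<\eta/2\}}\big[f(|\nabla u|)-f(0)\big]$ but incurs a cost of comparable order $\int_{\{\eta/2<|u-c|<\eta\}}\big[f(|\phi'(u)|\,|\nabla u|)-f(|\nabla u|)\big]$, and convexity of $f$ works \emph{against} you there. (You correctly flag the subcase $c\in\psi(\partial\Omega)$ as problematic, but the difficulty is more basic and arises even when $c$ avoids the boundary data — and, separately, if one tries to localize the modification to a small disc, the boundary glue would force $\phi=\mathrm{id}$ on an interval around $c$ since the regular level curves $\{u=c\}$ exit any such disc, contradicting the flattening.)

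The paper replaces this step by a topological argument that you set up but do not exploit. It shows first (Theorem \ref{v:c1}) that the index $\mathrm I(z_0)$ of the isolated critical point must vanish: Theorem \ref{th:ex:alessandriniMagnanini} describes the level set $\{u=c\}$ near $z_0$ as $L$ crossing curves with $\mathrm I(z_0)=1-L$, the arcs are curves of steepest descent for $v$, and because $|\nabla v|$ extends continuously to $z_0$ with $|\nabla v(z_0)|=f'(0)>0$ one can compare the increments of $v$ along consecutive branches against the mean value estimate on the chord joining them; this forces the asymptotic angle between consecutive branches to equal $\pi$, so $L=1$ and $\mathrm I(z_0)=0$. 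Then (Theorem \ref{distributionalSolution:hasNoIsolatedCriticalPoint}) the coarea formula plus the index identity of Lemma \ref{indexFormula} give $\int_{A_\varepsilon}\det(\nabla^2 u)/|\nabla u|=0$ on punctured sublevel sets of $|\nabla u|$, whence $\det(\nabla^2 u)\equiv0$ there, and Bernstein's inequality forces $u$ to be affine, hence constant near $z_0$, a contradiction. Your positivity of $|\nabla v|$ at $z_0$ is precisely the right ingredient, but it needs to be combined with the index/coarea machinery rather than with a range-reparametrization competitor.
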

\par
This result considerably improves the one obtained in
\cite{magnaniniTalenti1} for solutions 
of class $C^2$ and settles a conjecture raised by G. Talenti\footnote{Personal communication.}.
Its proof proceeds by contradiction and relies on two remarks: 
\begin{itemize}
\item[(i)] if a solution $u$ of \eqref{varpb} has an isolated critical point at $z_0\in\Om,$
then it is a weak solution of \eqref{divFormForU} in a neighborhood $\cU$ of $z_0;$ 
\item[(ii)] even if $u$ is assumed to be only $C^1$ in $\cU,$ yet one can define
an {\it index} $I(z_0)$ for the vector field $\na u$ in $z_0$ --- a {\it winding number} defined on loops avoiding $z_0.$
\end{itemize}
Remark (i) then implies that $u$ is a classical solution of \eqref{nondivu} in $\cU\setminus\{ z_0\}$
and also that there exists a (distributional) {\it stream function} $v$ for $u$ in $\cU\setminus\{ z_0\},$  
that is a function $v$ such that
\begin{eqnarray}
\label{stream}
\pa_x v=-f'(|\na u|)\,\frac{\pa_y u}{|\na u|}, \ \ \pa_y v=f'(|\na u|)\,\frac{\pa_x u}{|\na u|}
%v\sb{x}=-f'(\gm{u})\,\frac{u\sb{y}}{\gm{u}}, \ \ v\sb{y}=f'(\gm{u})\,\frac{u\sb{x}}{\gm{u}}
\end{eqnarray}
in $\cU\setminus\{ z_0\}$, in a distributional sense (stream functions
will play a crucial r\^ole in the following sections. The reader may
refer to \cite{aronssonStream} as a propaedeutic reading for what
concerns Section \ref{approx}, while \cite{alessandriniMagnanini2} show how
stream functions have been sometimes used to infer critical-point set
properties). The modulus $\gm{v}$ of 
the gradient of $v$ is proven to 
extend continuously to $z_0$ and, since both $\na u$ and $\na v$ must have the
same index, we infer that $\ii(z_0)=0,$ which entails a contradiction
versus the hypothesis of $z_0$ being isolated.
It is clear that a possible generalization of Theorem
\ref{minimizer:hasNoIsolatedCriticalPoint} to general dimension  should rely on different arguments.
\par
The crucial role played by the stream function $v$ in the proof of
Theorem \ref{minimizer:hasNoIsolatedCriticalPoint} motivates 
a better understanding of system \eqref{stream}
or its inverse
\begin{equation}
\label{invstream}
\pa_x u=g'(|\na v|)\,\frac{\pa_y v}{|\na v|}, \ \ \pa_y u=-g'(|\na v|)\,\frac{\pa_x v}{|\na v|},
%u\sb{x}=g'(\gm{v})\,\frac{v\sb{y}}{\gm{v}}, \ \ u\sb{y}=-g'(\gm{v})\,\frac{v\sb{x}}{\gm{v}},
\end{equation}
which can be also viewed as sorts of Cauchy-Riemann
systems for $u$ and $v.$  Here $g$ is the {\it Fenchel conjugate of} $f$ defined by 
\begin{equation}
  \label{eq:fenchel}
  g(r)=\sup\{\rho r-f(\rho): \rho\ge 0\}, \quad r\in [0,\infty).
\end{equation}
In other words, we want to investigate on
the possibility of defining a generalized {\it stream function} $v$ associated
to a solution $u$ of \eqref{varpb}. 
The main difficulty with this task is that,
since by Theorem \ref{minimizer:hasNoIsolatedCriticalPoint} $u$ may not
have isolated critical points, system \eqref{stream}
is in general severely singular. 
\par
In this paper, we do not completely secceed in our task, but
we present a few results which may help to understand the problem.
\par
In Section \ref{approx}, we show that, owing
to the properties of the chosen lagrangeans $f\sb{\a},$  
the system
\begin{equation}
\label{streamm}
\pa_x v\sb{\a}=-f'\sb{\a}(|\na u\sb{\a}|)\,\frac{\pa_y u\sb{\a}}{|\na u\sb{\a}|}, \ \ \pa_y v\sb{\a}=f\sb{\a}'(|\na u\sb{\a}|)\,\frac{\pa_x u\sb{\a}}{|\na u\sb{\a}|}
%\va{}\sb{,x}=-\fa'(\gm{\ua})\,\frac{\ua{}\sb{,y}}{\gm{\ua}}, %
%\ \ \va{}\sb{,y}=\fa'(\gm{\ua})\,\frac{\ua{}\sb{,x}}{\gm{\ua}}
\end{equation}
can be uniquely solved by suitably normalized stream functions $v\sb{\a},$
which are critical points of functionals,
\begin{equation*}
  \label{eq:approxForV}
  \cK\sb{\a}(v)=\ints_\Om g\sb{\a}(|\na v|)\, dx,
\end{equation*}
where $g'\sb{\a}=(f'\sb{\a})^{-1}$ ($g\sb{\a}$ is indeed the Fenchel
  conjugate of $f\sb{\a}$ defined accordingly to \eqref{eq:fenchel}).
It is evident that $\ua$ and $\va$ also satisfy
\begin{equation}
\label{invstreamm}
\pa_x u\sb{\a}=g'\sb{\a}(|\na v\sb{\a}|)\,\frac{\pa_y v\sb{\a}}{|\na v\sb{\a}|}, \ \ \pa_y u\sb{\a}=
-g'\sb{\a}(|\na v\sb{\a}|)\,\frac{\pa_x v\sb{\a}}{|\na v\sb{\a}|}.
%\ua{}\sb{,x}= \ga'(\gm{\va})\,\frac{\va{}\sb{,y}}{\gm{\va}}, %
%\ \ \ua{}\sb{,y}=-\ga'(\gm{\va})\,\frac{\va{}\sb{,x}}{\gm{\va}}
\end{equation}
In Theorem \ref{mainResultForSys}, under appropriate
assumptions on the approximating sequence $(\fa)_{\a\in\NN},$ we show that the sequences $(\ua)_{\a\in\NN}$ and 
$(\va)_{\a\in\NN}$ contain subsequences which converge respectively to functions $u$ and $v$
satisfying \eqref{invstream} almost everywhere. 
\par
We are not able to prove that $u$ and $v$ also satisfy \eqref{stream}; however,
by the same argument used in the proof of 
Theorem \ref{mainResultForU}, we show that $v$ is a viscosity solution $v$ of 
\begin{equation}
\label{nondivv}
\tr [\cB(\na v)\,\na^2 v]=0,
\end{equation}
where $\cB(p)$ is a matrix whose coefficients are the uniform
limits of 
$$
-\big[1-\aaa(\ga'(|p|))\big]p_ip_j - |p|^2\aaa(\ga'(|p|))\de_{ij}
\quad i, j=1,\dots, 2,
$$
% $$
% \cB_{ij}(p)=r^2 \left\{\frac{r}{g'\sb{\a}(r)
%     f\sb{\a}''(g'\sb{\a}(r))}-1\right\}^{-1}%
% \,\de_{ij}+ p_i\, p_j,
% \quad i, j=1,\dots, N.
% $$
with
\begin{equation}
\label{aalfa}
 \aaa(\rho) = %
  \begin{cases}
    \dfrac{\rho \fa''(\rho)}{\fa'(\rho)}, & \rho>0, \\
    \lim\limits_{\rho\to 0^+} \dfrac{\rho \fa''(\rho)}{\fa'(\rho)},& \rho=0.
  \end{cases}
\end{equation}
This is the content of Theorem \ref{mainResultForV}.
\par
It is worth mentioning that the analytic form of $\cB$ may depend upon
the particular approximating sequence $(\fa)\sb{\adom}$ adopted,
that is, different approximations lead to different limit equations (see
Remark \ref{practicalApprox} for details). One of these choices leads to the following
interesting equation for $v$:
\begin{equation*}
  -\big[1-\aaalim(\g'(\gm{v}))\big]\lp\sb\infty v - %
  \gm{v}^2\aaalim(\g'(\gm{v}))\lp v = 0,
\end{equation*}
where $g$ is given by \eqref{eq:fenchel}. Notice
that, for values of $\gm{v}$ less than or equal to $f'(0),$ $v$
must be $\infty$-harmonic.

\section{Critical points of minimizers}
\label{sec:crit}\label{criticalPoints}
\label{criticalPoints:distributionalEquation}
This section will be devoted to the proof of our main result (Theorem
\ref{minimizer:hasNoIsolatedCriticalPoint}), which will be consequence
of Lemma \ref{bridgeToMinimizers} and Theorem
\ref{distributionalSolution:hasNoIsolatedCriticalPoint},
which may be of independent interest.

\begin{lemma}
  \label{bridgeToMinimizers}
  Let $u$ be a solution of \eqref{varpb} with $\f$ satisfying
  \eqref{eq:cond:f}. If $u\in C\sp{1}(\Omega)$ and the set
  $\{z\in\Omega: \gm{u(z)}=0\}$ has zero Lebesgue measure, then $u$ is
  a weak solution of \eqref{divFormForU} in $\Omega$.
\par
In particular, if $\z\in\Om$ is an isolated critical point for $u,$ then 
there exists a neighborhood of $\z$ in which $u$ is a weak solution of \eqref{divFormForU}.
\end{lemma}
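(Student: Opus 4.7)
The strategy is to show that $\cJ$ is G\^ateaux-differentiable at $u$ in every direction $\phi\in C_0^\infty(\Omega)$ and to identify the derivative with the natural weak-form expression. Since $u+t\phi$ is an admissible Lipschitz competitor for every $t\in\RR$, minimality then forces this derivative to vanish, which is precisely the weak form of \eqref{divFormForU}.

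To carry this out, fix $\phi\in C_0^\infty(\Omega)$ and consider the pointwise difference quotient
$$
Q_t(x)=\frac{f(|\nabla u(x)+t\nabla\phi(x)|)-f(|\nabla u(x)|)}{t}.
$$
Since $\nabla u$ is continuous and $\spt\phi$ is compact, $|\nabla u|+|\nabla\phi|\le M$ on $\spt\phi$ for some $M>0$; as $f\in C^1([0,\infty))$ is Lipschitz on $[0,M+1]$, this gives $|Q_t|\le L\,|\nabla\phi|$ uniformly for $|t|\le 1$. At every $x\in\spt\phi$ with $\nabla u(x)\ne 0$ the chain rule yields $\lim_{t\to 0}Q_t(x)=f'(|\nabla u(x)|)\,\nabla u(x)\cdot\nabla\phi(x)/|\nabla u(x)|$, and by hypothesis this covers almost every $x\in\spt\phi$. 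Dominated convergence then produces
$$
\frac{d}{dt}\bigg|_{t=0}\cJ(u+t\phi)=\int_\Omega f'(|\nabla u|)\,\frac{\nabla u\cdot\nabla\phi}{|\nabla u|}\,dx,
$$
the integrand being set to zero on the null set $\{\nabla u=0\}$. Minimality of $u$ forces this quantity to vanish for every admissible $\phi$, which is the desired weak formulation.

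For the second assertion, choose $r>0$ so small that $\overline{B(z_0,r)}\subset\Omega$ and $z_0$ is the only critical point of $u$ in the closed ball. A standard cut-and-paste argument shows that $u$ also minimises $\cJ$ on $B(z_0,r)$ among Lipschitz maps sharing its own boundary trace: a strictly better competitor on $B(z_0,r)$ could be extended by $u$ outside to contradict global minimality, the extension still being Lipschitz because the two pieces agree on $\partial B(z_0,r)$ and the boundary condition on $\partial\Omega$ is preserved. Applying the first part on $B(z_0,r)$, where the critical set reduces to the single point $z_0$ and is therefore negligible, gives the weak equation in a neighbourhood of $z_0$.

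The only genuinely delicate step is this dominated-convergence argument: it is where the measure-zero hypothesis does the entire work, and without it one would only be able to produce one-sided variations and would be pushed back into the subdifferential framework developed in \cite{magnaniniTalenti2}.
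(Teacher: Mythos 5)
Your proof is correct and follows essentially the same strategy as the paper: compute the directional derivative of $\cJ$ at $u$ in the direction $\phi$, observe that the measure-zero hypothesis kills the problematic contribution from the critical set, and invoke minimality. The paper simply asserts the formula for $\partial\cJ(u)(\phi)$ (including the extra term $f'(0)\int_{\{\nabla u=0\}}|\nabla\phi|$, which under the hypothesis vanishes, making the one-sided derivative odd in $\phi$), whereas you justify it by dominated convergence; and for the second claim you make explicit the cut-and-paste argument showing that a global minimizer is a local one, which the paper leaves implicit. These are details the paper calls ``easy to see,'' so the substance is the same.
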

\begin{proof}
  It is easy to see that, for any
  test function $\phi$ whose support is contained in $\Om$,
  the derivative of $\jj$ in the direction given by $\phi$ may be
  written as
  \begin{displaymath}
    \partial\jj(u)(\phi)=\!\!\!\!\!%
       \int\sb{\{\gm{u}\neq0\}}%
       \!\!\!\!\!\f'(\gm{u})\,
                   \langle\dfrac{\diff u}{\gm{u}},\diff\phi\rangle\, dxdy + 
                   \f'(0)\!\!\!\!\int\sb{\{\gm{u}=0\}}%
       \!\!\!\gm{\phi}\, dxdy.
  \end{displaymath}
  By assumption, the second addendum vanishes, while the first one
  amounts to
  \begin{displaymath}
    \int\sb{\Om}\f'(\gm{u})\,
                  \langle\dfrac{\diff u}{\gm{u}},\diff\phi\rangle\, dxdy.
  \end{displaymath}
  If $u$ is a solution of \eqref{varpb}, then $\partial\jj(u)(\phi)=0$ for every test function
compactly supported in $\Om$ and hence $u$ is a weak solution of \eqref{divFormForU} in $\Om.$
\end{proof}

We will next proceed to compute the index $\ii(\z)$ of an isolated
critical point of a solution $u\in C\sp{1}(\Omega)$ of \eqref{varpb}.
We recall that $\ii(\z)$ is defined by the
  formula
  \begin{equation}
    \label{def:index}
  \ii(\z) = \dfrac{1}{2\pi}\int\sb{+\gamma}%
                              \dfrac{u\sb{x}du\sb{y}-u\sb{y}du\sb{x}}%
                                    {|\diff u|^2},
  \end{equation}
  where $+\gamma$ is any loop which wraps $\z$ counterclockwise and no
  other critical point. \cri
Also recall the geometric meaning of the previous definition: the index of
a critical point of a $C\sp1$-regular 
function is defined as the topological index of the vector field $\na u/|\na u|$ at the same
point and that the latter corresponds to the topological degree of the field $\na u/|\na u|$
itself, considered as a map of the unit circle in itself.

Given any $r>0$ such that $B = \overline{B(z_0,r)}\subset\Omega$
we set our first goal to proving that the differential form
\begin{equation}
  \label{eq:4-omegaBis}
  \omega = \dfrac{\fb(\gm{u})}{\gm{u}}\,(-u\sb ydx
  + u\sb xdy),
\end{equation}
which is continuous and bounded in $\bp = \b\setminus\{\z\}$, may
be integrated to obtain a so-called \emph{stream function} (see
\cite{aronssonOnStreamFunctions}) $v$ which 
is continuous in $B$. 

Notice that \eqref{divFormForU} may be cast into the form
\begin{equation}
  \label{eq:currentFormForU}
  d\omega = 0,\qquad \text{in }\Omega,
\end{equation}
in the sense of currents/distributions, where $d$ has to be
interpreted as the boundary operator (see \cite{Simon}).
\begin{lemma}
\label{v:existence}
Let $u:\Omega\longrightarrow\R$ be a $C\sp1(\Omega)$ distributional
solution to \eqref{divFormForU}, and let $\z$ be an isolated
critical point for $u$. Then the following claims hold.
\begin{enumerate}[(i) ]
\item\label{inner:second:first}
  the period of the 1-form $\omega$ given in \eqref{eq:4-omegaBis}
  around $\z$ is null;
\item\label{inner:second:second} there exists a function
  $v\in C\sp1(\bp)\cap C\sp0(\b)$ such that
  $dv=\omega$;
\item moreover, $\gm{v}\in C\sp0(\b)$.
\end{enumerate}
\end{lemma}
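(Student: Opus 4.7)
My plan is in three steps, addressing (i)--(iii) in turn. Choose $r$ small enough that $\z$ is the only critical point of $u$ in $\b$, so that $\gm{u}>0$ on $\bp$ and $\omega$ is continuous there. The bound $|u\sb x|,|u\sb y|\le\gm{u}$ applied to \eqref{eq:4-omegaBis} gives $|\omega|\le\fb(\gm{u})$ pointwise on $\bp$, and since $u\in C\sp1(\Om)$ and $\fb\in C([0,\infty))$, the function $\fb(\gm{u})$ extends continuously to $\overline{\b}$ and is bounded there by some constant $M$. Under the hypothesis that $u$ is a distributional solution of \eqref{divFormForU}, the identity $d\omega=0$ holds on $\Om$ in the distributional sense (cf.\ \eqref{eq:currentFormForU}). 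For part (i), I mollify $\omega$ componentwise on $\Om$: on $\{\,\mathrm{dist}(\cdot,\pa\Om)>\eps\,\}$ the smoothed forms $\omega\sb\eps$ are smooth and satisfy $d\omega\sb\eps=0$ pointwise, so classical Stokes on any annulus $\{r\sb1\le|z-\z|\le r\sb2\}\subset\bp$ gives $\int\sb{\gamma\sb{r\sb2}}\omega\sb\eps=\int\sb{\gamma\sb{r\sb1}}\omega\sb\eps$. Sending $\eps\to 0$, with uniform convergence $\omega\sb\eps\to\omega$ on each circle $\gamma\sb r\subset\bp$, shows the period $P:=\int\sb{\gamma\sb r}\omega$ is independent of $r$; the estimate $|P|\le 2\pi rM$ then forces $P=0$.

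For part (ii), the vanishing of $P$ together with $d\omega=0$ on $\bp$ (and the fact that $\gamma\sb r$ generates the only nontrivial homology class of $\bp$) implies that $\omega$ is exact: fixing a basepoint $z\sb*\in\bp$, I define
\[
v(z):=\int\sb{z\sb*}\sp{z}\omega
\]
along any piecewise $C\sp1$ path in $\bp$, and path-independence follows from the vanishing of all periods. Continuity of $\omega$ then yields $v\in C\sp1(\bp)$ with $dv=\omega$. To extend $v$ continuously to $\z$, I join any $z\sb1,z\sb2\in\bp$ near $\z$ by a path in $\bp$ of length bounded by $C(|z\sb1-\z|+|z\sb2-\z|)$---for instance, a radial segment from $z\sb2$ to the circle of radius $|z\sb1-\z|$ followed by an arc to $z\sb1$---which gives
\[
|v(z\sb1)-v(z\sb2)|\le MC\,(|z\sb1-\z|+|z\sb2-\z|).
\]
Hence $v$ is Cauchy along every sequence approaching $\z$, yielding a unique continuous extension and $v\in C\sp0(\b)$.

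Finally, part (iii) is a one-line computation from \eqref{eq:4-omegaBis}:
\[
v\sb x\sp2+v\sb y\sp2=\Big(\tfrac{\fb(\gm{u})}{\gm{u}}\Big)\sp{\!2}(u\sb y\sp2+u\sb x\sp2)=\fb(\gm{u})\sp2\quad\text{on }\bp,
\]
so $\gm{v}=\fb(\gm{u})$ on $\bp$, and this is the restriction of a continuous function on $\b$ (taking the value $\fb(0)>0$ at $\z$), providing the continuous extension. The main obstacle lies in part (i): since $u$ is only $C\sp1$, $\omega$ is merely continuous and classical Stokes cannot be invoked on $\omega$ directly. The mollification step is what bridges this regularity gap; once (i) is in hand, parts (ii) and (iii) follow by standard exact-form potential theory and an algebraic identity, respectively.
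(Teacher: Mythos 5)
Your proposal is correct, and the structure (period vanishes, integrate to get $v$, Lipschitz estimate for the extension, algebraic identity for $\gm{v}$) matches the paper's. Parts (ii) and (iii) are essentially identical to the paper. The genuine difference is in how part (i) is established. The paper mollifies the \emph{test function}: it tests $d\omega=0$ against $\phi\sb\varepsilon=\eta\sb\varepsilon\ast\chi\sb{B\sb1}$, notes $\spt(d\phi\sb\varepsilon)$ is a thin annular region avoiding $\z$ where $\omega$ is continuous, and passes to the limit $d\phi\sb\varepsilon\to d\chi\sb{B\sb1}$ using the weak--strict convergence of measures (Ambrosio--Fusco--Pallara, Giaquinta--Modica), concluding directly that $\int\sb{\partial B\sb1}\omega=0$. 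You instead mollify \emph{the form $\omega$}: then $d\omega\sb\varepsilon=0$ pointwise, classical Stokes on an annulus shows the period over $\gamma\sb r$ is independent of $r$, and the crude bound $|P|\le 2\pi r\,\sup\sb{\overline{B}}\f'(\gm{u})$ forces $P=0$ upon letting $r\to0$. Both are valid; your version avoids the measure-theoretic limit and replaces it with an elementary size estimate exploiting the boundedness of $\omega$ near $\z$, which is pleasingly transparent. Note also that since $u$ is a distributional solution in all of $\Omega$, $d\omega\sb\varepsilon=0$ holds pointwise across $\z$ as well, so one could in fact apply Stokes directly on the full disk $B\sb r$ and skip the annulus-plus-$r\to0$ step entirely; your two-step version is slightly roundabout but harmless, and it would even survive the weaker hypothesis that $d\omega=0$ holds only on $\Omega\setminus\{\z\}$ together with boundedness of $\omega$ near $\z$.
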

\begin{proof}
(i) Let $B_1=\ball{\z}{r/2}$. We must prove that $\int\sb{\partial
  B_1}\omega=0$. 
    Indeed, we know that, for any $\varphi\in C\sp{\infty}\sb0(\b)$
    it holds:
    \begin{displaymath}
%      \label{eq:CP-omegaClosedDef}
      \int\sb{\b}\omega\wedge d\phi = 0.
    \end{displaymath}
    (This is really what \eqref{eq:currentFormForU} means.)
    Let $(\eta\sb\varepsilon)\sb{\varepsilon>0}$ be a family of
    radially symmetric
    regularizing kernels, and define $\phi\sb\varepsilon =
    \eta\sb\varepsilon\ast\chi\sb{B_1}$, where as usual $\chi\sb{B_1}$ is the
    characteristic function of the set $B_1$. Then $d\chi\sb{B_1} =
    \mathcal{H}\sp1\llcorner\partial B_1(-\nu\sb1 dx -\nu\sb2 dy)$
    ($\nu=(\nu\sb1,\nu\sb2)$ is the outer unit normal to the domain
    $B_1$, while $\llcorner$ denotes the operator of restriction of
    measures to subsets) and
    $\spt(d\phi\sb\varepsilon)$ is contained in a tubular $\varepsilon$-neighborhood
of $\partial B_1$, hence
    taking $\varepsilon\sb0$ small enough,
    $\z\notin\spt(d\phi\sb\varepsilon)$ and
    $\spt(d\phi\sb{\varepsilon})\subset B$ 
    for all $\varepsilon\le\varepsilon\sb0$ and it holds (since
    $\spt(d\phi\sb{\varepsilon})\subset\spt(d\phi\sb{\varepsilon\sb0})$)
    \begin{displaymath}
      0 = \int\sb{\b}\omega\wedge d\phi\sb\varepsilon =
          \int\sb{\spt(d\phi\sb{\varepsilon\sb0})}\!\!\!\!\!\!\omega%
                                                   \wedge d\phi\sb\varepsilon.
    \end{displaymath}
    The last integral tends to 
    \begin{displaymath}
          \int\sb{\spt(d\phi\sb{\varepsilon\sb0})}\!\!\!\!\!\!\omega%
                                                   \wedge d\chi\sb{B_1}=
          \int\sb{\partial B_1}\omega.
    \end{displaymath}
    The alleged convergence is worth an explanation: we know (\eg see
    ~\cite[Thm. 2.2]{AFP}) that both $d\phi\sb\varepsilon$ tends to
    $d\chi\sb{\b}$ and the total variation $|d\phi\sb\varepsilon|$
    tends to $|d\chi\sb{\b}|$ in the sense of measures. This is enough
    to prove convergence in the stronger topology dual to the space of
    continuous
    and bounded functions on $\spt(d\phi\sb{\varepsilon\sb0})$ (see
    ~\cite[Prop. 2, pg. 38]{giaquintaModica}), to which $\omega$
    belongs.
    \renewcommand{\b}{B}

\par
(ii) Thus we can integrate $\omega$, to obtain a function $v\in
    C\sp1(\bp)$. We claim that $v$ can be extended
    continuously to $\b$. In fact,
    since $dv=\omega$, then \eqref{stream} holds and hence $\diff v\in L\sp\infty(\bp)$;
    therefore $v$ is (Lipschitz and \af) uniformly continuous on
    $\bp$ and then we can extend it continuously to the border of
    $\bp$, in particular to $\z$. This allows us to \emph{mend} the
    domain of definition of $v$ from the topological point of view.
\par
(iii) From the definition of $\omega$, it turns out that
$|\diff v| = \fb({\gm{u}})$; since $u\in C\sp1(\Omega)$ and $\fb$ is
continuous, $|\diff v|$ is continuous on $\b$ too.
\end{proof}

Now we prove that the index of $\z$ as a critical point of the function
$u$ is zero. We recall the
following proposition (see \eg ~\cite[Lemma 3.1]{alessandriniMagnanini}).
\begin{externalTheorem}
\label{th:ex:alessandriniMagnanini}
Let $w$ be a real-valued $C\sp1$ function in an open set $\Omega$ in
the complex plane. Let $z\sb0\in\Omega$ be an isolated critical point
of $w$.\cri
Then, one of the following cases occurs.
\begin{enumerate}[(i) ]
\item There exists a neighborhood $\mathcal{U}$ of $z\sb0$ such that
  $\{z\in\mathcal{U}: w(z)=w(z\sb0)\}$ is exactly $z\sb0$, and we have
  $\ii(z\sb0) = 1$.
\item There exists a positive integer $L$ and a neighborhood
  $\mathcal{V}$ of $z\sb0$ such that the level set
  $\{z\in\mathcal{V}: w(z)=w(z\sb0)\}$ consists of $L$ simple
  curves. If $L\ge2$, each pair of such curves crosses at $z\sb0$
  only. We have $\ii(z\sb0) = 1 - L$.
\end{enumerate}
\end{externalTheorem}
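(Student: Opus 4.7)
The plan is to work locally near $z_0$, first describing the structure of the level set $\Gamma = \{z : w(z) = w(z_0)\}$ in a small disk, and then reading the index off this structure.

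Translating so that $w(z_0) = 0$, I pick $r > 0$ with $\overline{B(z_0,r)} \subset \Omega$ and such that $z_0$ is the only critical point of $w$ in this disk. Since $\nabla w$ does not vanish on $B(z_0,r) \setminus \{z_0\}$, the implicit function theorem shows that $\Gamma \setminus \{z_0\}$ is a $C^1$ embedded one-dimensional submanifold of the punctured disk.

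I would then establish a dichotomy. If $w$ does not change sign on $B(z_0,r) \setminus \{z_0\}$, then $\Gamma \cap B(z_0,r) = \{z_0\}$, $z_0$ is a strict local extremum, and we are in case (i). Otherwise, I would show that, possibly after shrinking $r$, the set $\Gamma \cap B(z_0,r)$ is a finite union of exactly $L \ge 1$ simple $C^1$ arcs meeting only at $z_0$, each of which meets $\partial B(z_0,r)$ transversally in exactly two points. This structural statement is the principal obstacle: in the purely $C^1$ regime one cannot invoke a Morse lemma or a Hartman--Wintner asymptotic expansion, so the argument has to be topological. My strategy would be (a) to rule out connected components of $\Gamma$ contained entirely in $B(z_0,r)$ (such a closed loop not through $z_0$ bounds a region where $w$ vanishes on the boundary; either $w$ is not identically zero inside and then attains a non-trivial extremum at an interior critical point distinct from $z_0$, or $w \equiv 0$ there and $\nabla w$ vanishes on a set of positive measure, both contradicting isolation); and (b) to rule out the accumulation of arcs emanating from $z_0$ by a similar sandwich argument producing critical points converging to, but distinct from, $z_0$.

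In case (i), $\ii(z_0) = 1$ follows from a homotopy: since $z_0$ is a strict extremum, $\nabla w / |\nabla w|$ has strictly positive (or strictly negative) inner product with the outward radial field on $\partial B(z_0,r)$, so the straight-line homotopy between the two stays in the non-zero vectors and both fields share the winding number $+1$.

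In case (ii), I would shrink $r$ further so that $\partial B(z_0,r)$ meets $\Gamma$ transversally in exactly $2L$ points, partitioning the circle into $2L$ open sub-arcs on which $w$ has alternating signs. Along each sub-arc, $\nabla w$ is continuous and non-zero, and at each crossing point it is normal to the corresponding branch of $\Gamma$. A direct angular computation, summing the rotation increments of $\nabla w$ across the $2L$ sub-arcs as the circle is traversed counterclockwise, yields the total rotation $2\pi(1-L)$, whence $\ii(z_0) = 1 - L$.
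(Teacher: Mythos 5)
This statement is Theorem~A in the paper, imported verbatim from \cite[Lemma~3.1]{alessandriniMagnanini}; the paper cites it and gives no proof, so there is no internal argument to compare against. Judged on its own terms, your sketch identifies the correct strategy (dichotomy between an isolated level point and finitely many level arcs, then an index count), but it contains a genuine gap at the point you yourself call the principal obstacle: the finiteness of the branches. Your argument (a), ruling out closed loops of $\Gamma$ not through $z_0$, is fine. Argument (b), however, is not spelled out and does not obviously work. Two adjacent half-branches emanating from $z_0$ bound a curvilinear sector in which $w$ has a fixed sign, but there is no forced interior extremum of $w$ in such a sector, and hence no extra critical point is produced; the alternation of sign happens along the boundary arcs, not across a compact region with $w$ vanishing on its whole boundary. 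Without additional structure --- in the Alessandrini--Magnanini setting $w$ solves a second-order elliptic PDE, which supplies Hartman--Wintner-type asymptotics and hence finiteness --- $C^1$ regularity plus isolation of the critical point do not by themselves deliver a finite $L$. This is precisely the content of the cited lemma, and your sketch leaves it open.

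A secondary, fixable, problem is the index count in case~(i). You claim that if $z_0$ is a strict local extremum then $\nabla w/|\nabla w|$ has a sign-definite radial component on $\partial B(z_0,r)$, so the straight-line homotopy to the radial field never vanishes. For a mere $C^1$ function this need not hold on a round circle: steepest ascent can point inward at some boundary points of a geometric disk even when $w(z_0)$ is a strict minimum. The standard repair is to compute the index along a nearby regular level curve $\{w=w(z_0)+c\}$ (the boundary of the component of the sublevel set containing $z_0$), on which $\nabla w$ \emph{is} the outward normal, and then the winding number of the outward normal to a simple closed curve enclosing $z_0$ is $+1$. With that correction case~(i) is sound, but the finiteness gap in case~(ii) remains the real missing ingredient.
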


The previous theorem let us prove the following crucial statement.
\begin{thm}\label{v:c1}
  Let $u$ be a $C\sp{1}$ solution of 
  \eqref{divFormForU} in the sense of
  distributions. Assume that $\z$ is an isolated critical point for
  $u$.\cri
  Then the index of $\z$ as a critical point is null: $\ii(\z)=0$.
\end{thm}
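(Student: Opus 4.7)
My plan is to compute the index $\ii(\z)$ via the stream function $v$ provided by Lemma~\ref{v:existence}, and then to use the continuity and positivity of $\gm{v}$ at $\z$ to force the index to vanish. The key steps, in order, are a rotation identity between $\nabla u$ and $\nabla v$, the observation that $\z$ is morally a regular point of $v$, and a level-set argument in the spirit of Theorem~\ref{th:ex:alessandriniMagnanini}.

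First I would record the following rotation identity. The Cauchy--Riemann-type system \eqref{stream}, equivalently the relation $dv=\omega$ with $\omega$ as in \eqref{eq:4-omegaBis}, writes on $\bp$ as
\[
\nabla v = \frac{\f'(\gm{u})}{\gm{u}}\, R_{\pi/2}(\nabla u),
\]
where $R_{\pi/2}$ denotes counterclockwise rotation by $\pi/2$. Since a strictly positive rescaling and a constant rotation do not alter winding numbers of vector fields along loops, applying the definition \eqref{def:index} to $\nabla v$ in place of $\nabla u$ along any loop encircling $\z$ in $\bp$ gives the same value; thus $\ii(\z)$ equals the analogous index $\ii_v(\z)$ of the vector field $\nabla v$ at $\z$.

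Next I would invoke Lemma~\ref{v:existence}(iii), by which $\gm{v}$ is continuous on $B$ with $\gm{v(\z)} = \f'(0) > 0$. Hence $\gm{v}$ is bounded below by a positive constant on a small disk around $\z$, and $\nabla v$ is a continuous, nowhere-vanishing vector field on the corresponding punctured disk. Morally $\z$ is not a genuine critical point of $v$: were $v$ of class $C^1$ across $\z$, Theorem~\ref{th:ex:alessandriniMagnanini} would apply directly and give that $\{v=v(\z)\}$ is a single simple curve through $\z$, whence $\ii_v(\z) = 1-1 = 0$.

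To reach the same conclusion with $v$ only continuous at $\z$, I would argue in two sub-steps. Case (i) of Theorem~\ref{th:ex:alessandriniMagnanini}, namely that $\z$ is a local extremum of $v$, can be ruled out by a period argument dual to Lemma~\ref{v:existence}(i): on a hypothetical small closed level curve $\gamma$ of $v$ around $\z$, the closed $1$-form $du = u_x\,dx + u_y\,dy$ restricts, by \eqref{stream}, to $\gm{u}\,dl$ along $\gamma$, so $\oint_\gamma du > 0$, contradicting the single-valuedness of $u$ on loops in $\bp$. In case (ii) of Theorem~\ref{th:ex:alessandriniMagnanini} one must further show that only one simple branch of $\{v=v(\z)\}$ passes through $\z$, yielding $L=1$ and hence $\ii_v(\z)=0$. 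This last sub-step is the main obstacle, since the continuity of $\gm{v}$ alone does not prevent $\nabla v$ from having several distinct limiting directions at $\z$; one has to combine the continuous modulus bound on $\nabla v$ with the specific pairing between $u$ and $v$ given by \eqref{stream}/\eqref{invstream}, adapting the level-set analysis underlying Theorem~\ref{th:ex:alessandriniMagnanini} to the merely $C^0$ regularity of $v$ at $\z$.
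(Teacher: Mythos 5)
Your rotation identity $\nabla v=\tfrac{\f'(\gm{u})}{\gm{u}}R_{\pi/2}(\nabla u)$ is correct and the reduction $\ii(\z)=\ii_v(\z)$ is legitimate, but the proposal has a genuine gap that you yourself flag and do not close, and the heuristic you rely on (``morally $\z$ is a regular point of $v$, so the index should vanish'') is in fact false. Continuity of $\gm{v}$ up to $\z$ together with $\gm{v(\z)}>0$ does \emph{not} force $\ii_v(\z)=0$: the function $v(z)=|z-\z|$ is $C^1$ on the punctured disk, continuous at $\z$, has $\gm{v}\equiv1$ (so the modulus extends continuously and is positive), yet the winding number of $\nabla v$ around $\z$ equals $1$. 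So Lemma \ref{v:existence}(iii) on its own cannot yield the conclusion; you must exploit the pairing with $u$. In addition, Theorem \ref{th:ex:alessandriniMagnanini} cannot be applied to $v$ at $\z$ at all: its hypotheses require $w\in C^1$ near $z_0$ and $z_0$ an isolated critical point of $w$, whereas here $v$ is only $C^0$ at $\z$ and $\z$ is not a critical point of $v$ (the modulus of its gradient is $\f'(0)>0$), so the dichotomy of that theorem is simply unavailable for $v$.

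The paper's proof sidesteps both difficulties by applying Theorem \ref{th:ex:alessandriniMagnanini} to $u$, where its hypotheses are satisfied ($u\in C^1$, $\z$ an isolated critical point), and then uses $v$ only as an auxiliary tool to pin down the geometry of $\{u=u(\z)\}$. Concretely: (a) the vanishing period of $\omega$ rules out $\{u=u(\z)\}=\{\z\}$; (b) if $\{u=u(\z)\}$ has $2L$ branches $\gamma_1,\dots,\gamma_{2L}$, each is rectifiable because it is a steepest-descent line of $v$ and $\gm{v}\ge\gm{v(\z)}>0$; (c) along consecutive branches one gets $v(z_i(r))=v(\z)\pm\gm{v(\z)}\,r+o(r)$, and a mean-value estimate on the chord $[z_i(r),z_{i+1}(r)]$ combined with the continuity of $\gm{v}$ at $\z$ forces every limiting angle $\theta_i$ between consecutive branch directions to equal $\pi$; (d) since $\sum\theta_i=2\pi$, this gives $L=1$, hence $\ii(\z)=1-L=0$. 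This chord/angle argument is exactly the piece missing from your proposal. If you want to complete your dual route (indexing $\nabla v$), you would have to reproduce an analogue of this argument for the level set $\{v=v(\z)\}$, but then you face the extra problem that $v$ is not known to be $C^1$ at $\z$, so the dichotomy you borrow from Theorem \ref{th:ex:alessandriniMagnanini} is not available and would itself need to be re-derived. Working with $u$ directly, as the paper does, avoids this entirely.
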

\begin{proof}
In the following, we will need to recall that the level lines of
$u$ correspond to the lines of steepest descent for $v$.

We apply Theorem \ref{th:ex:alessandriniMagnanini} to the function
$u$, to infer the geometry
of its level set $\{u=u(\z)\}$. We can exclude the case
$\{u=u(\z)\}=\{\z\}$, for otherwise $\z$ would be a local extremum for
$u$ and hence for some $\varepsilon>0)$ either $\{u = u(\z) +
\varepsilon\}$ or $\{u = u(\z) - \varepsilon\}$ would be a closed
curve $\gamma$ winding around $\z$ thus implying that
\begin{displaymath}
  \int\sb{+\gamma}\omega\neq 0.
\end{displaymath}
But this would contradict Lemma \ref{v:existence},
\eqref{inner:second:first}.

\begin{figure}[t]
%\begin{center}
\begin{minipage}[t]{0.3\linewidth}
  \parbox{0.3\linewidth}{\hbox{}}
\end{minipage}%
\begin{minipage}[t]{0.5\linewidth}
%\resizebox{4cm}{!}{\input{cp-uLevelSet1.pdftex}}
\resizebox{5cm}{!}{\input{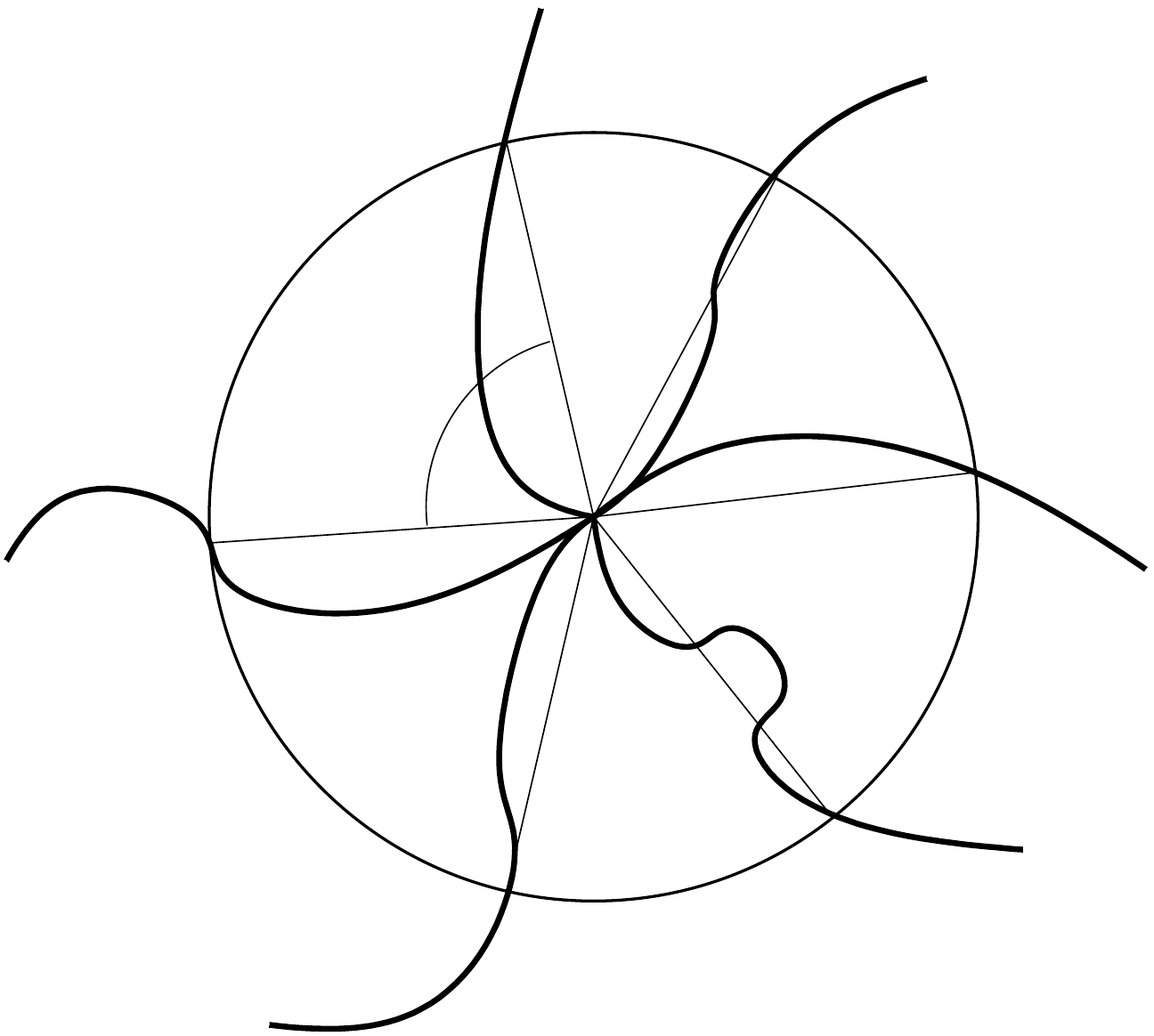tex}}
\caption{The level set $\{u=u(\z)\}$.}
\label{fig:CP-uLevelSet1}
\end{minipage}
%\end{center}
\end{figure}

Let $2L$ be the number of branches of $\{u=u(\z)\}$. (\caveat: in the
present terminology, a branch is any arc in which any of the curves whose
existence is stated by Theorem \ref{th:ex:alessandriniMagnanini} is
split into by $\z$. Hence, when the curves are $L$, the branches are
exactly $2L$.) Denote them by $\gamma\sb{1}\ldots\gamma\sb{2L}$; here
the subscripts are assigned in the counterclockwise order of
occurrence, starting from an arbitrary branch.
\cri
We preliminarly observe that each $\gamma\sb{i}$ is rectifiable. Indeed, let 
$z, z'\in\gamma\sb{i}\cap B';$ since $\gamma\sb{i}$ is a curve of steepest descent for $v$ and
$\displaystyle\gm{v(\z)}\le\gm{v}$ on $B,$
we can write that
\begin{displaymath}
  \begin{split}
    \displaystyle\max\sb{\b}|v-v(z')| \ge %
    v(z)-v(z')=%
	\int\sb{0}\sp{l}|\diff v(\gamma\sb{i}(s))|\, ds\ge%
    l\,\,|\diff v(\z)|,
  \end{split}
\end{displaymath}
where $l$ is the length of the arc on $\gamma\sb{i}$ joining $z'$ to $z.$
Thus, we discover that $l$ remains bounded as $z'\to\z,$ since $|\diff v(\z)|>0.$
\cri
When $r$ is small enough, each $\gamma\sb{i}$ crosses the circle
$\{z:|z-\z|=r\}$ in one point $z\sb{i}(r)$, for
$i=1\ldots2L$.
Hence, setting $\theta_i(r)$ as the angle between
the two directions $z\sb{i}(r)-\z$ and $z\sb{i+1}(r)-\z$ for
$i=1\ldots2L$ and $z\sb{2L+1}(r)=z\sb{1}(r)$, it holds that
\begin{equation}
  \label{eq:sumOfAlphas}
  \sum\sb{i=1}\sp{2L}\theta_i(r)=2\pi.
\end{equation}
%Therefore the same relation holds as $r$ tends to $0$.
\cri
Now consider two consecutive branches, say $\gamma\sb{i}$,
$\gamma\sb{i+1}$; we may assume that
$v$ is increasing along
$\gamma\sb{i}$ and decreasing along $\gamma\sb{i+1}$, away from $\z$. (It is
easy to see that the case in which $v$ is increasing --- or decreasing --- along
two consecutive branches is not consistent with the
present case.) 
Since the two branches are
rectifiable, 
%then
%\begin{equation*}
%  \label{vLittleO}
%  v(z)-v(\z)=\int\sb{0}\sp{l}\dfrac{d}{ds}v\big(\gamma\sb{i}(s)\big)\, ds,
%\end{equation*}
%and hence
then we can infer that
\begin{align*}
  v(z\sb{i})=&\ v(\z) + |\diff v(\z)|\,\, r + o(r), \\
  v(z\sb{i+1})=&\ v(\z) - |\diff v(\z)|\,\, r + o(r),
\end{align*}
as $r$ approaches $0,$ where we have set for short $z\sb{i}=z\sb{i}(r),\, z\sb{i+1}=z\sb{i+1}(r).$
\cri
At the same time, there exists $\xi\in[z\sb{i}, z\sb{i+1}]$ (here $[z\sb{i},
z\sb{i+1}]$ is the line segment joining $z\sb{i}$ to $z\sb{i+1}$) such
that the following inequality holds
\begin{equation*}
  \label{ineq:xi}
  v(z\sb{i})-v(z\sb{i+1})\le|\diff v(\xi)|\cdot|z\sb{i}-z\sb{i+1}|.
\end{equation*}
Thus, we conclude
\begin{displaymath}
  |\diff v(\xi)| \ge \dfrac{v(z\sb{i})-v(z\sb{i+1})}{|z\sb{i}-z\sb{i+1}|} \ge %
        \dfrac{2\cdot|\diff v(\z)| + o(1)}%
              {\sqrt{2\big[1-\cos\,\theta_i\big]}}.
\end{displaymath}
Let $\Theta$ be any
limit point of $\theta_i=\theta_i(r)$, as $r$ tends to $0$.
Then we obtain (eventually by taking subsequences) that
\begin{displaymath}
  \gm{v(\z)}=\liminf\sb{r\to0}|\diff v(\xi)| \ge
  \dfrac{2}{\sqrt{2(1-\cos\Theta)}}\,|\diff v(\z)|\ge%
  \gm{v(\z)}.
\end{displaymath}
Therefore $\Theta=\pi$.

Thus we have proved that two branches of $\{u=u(\z)\}$ cannot exist
such that the angle formed by the limit tangent versor to the branches
is different from $\pi$. But then \eqref{eq:sumOfAlphas} informs that
the level set $\{u=u(\z)\}$ is made of no more than two branches, \ie\
one curve. Hence $L=1$ and then, as stated by Theorem
\ref{th:ex:alessandriniMagnanini} the index of $\z$ as a critical point 
is zero.
\end{proof}

\begin{lemma}
  \label{indexFormula}
  Let $w$ be any $C\sp{2}$ function and let $t>0$ be any regular value
  of $\gm{w}$. \cri
  Then
  \begin{equation*}
    \int\sb{+\gamma\sb{t}}\dfrac{w\sb{x} dw\sb{y} - w\sb{y}
      dw\sb{x}}{\gm{w}^2} = %
    \int\sb{+\gamma\sb{t}}\dfrac{\det(\Hess w)}{|\Hess w\diff w|}\, ds,
  \end{equation*}
  where $\gamma\sb{t}=\{z: w(z)=t\}$ and $s$ denotes the arc-length.
\end{lemma}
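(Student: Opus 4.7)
The plan is to reduce the identity to a pointwise computation along $\gamma\sb{t}$, reading the hypothesis as saying that $\gamma\sb{t}$ is the level set of $\gm{w}$ at height $t$ (the only reading consistent with ``$t$ is a regular value of $\gm{w}$''). Under this reading, the implicit function theorem ensures that $\gamma\sb{t}$ is a $C\sp{1}$ curve on which $\diff(\gm{w})=\Hess w\,\diff w/\gm{w}$ never vanishes; in particular $\Hess w\,\diff w\neq 0$ on $\gamma\sb{t}$, so the denominator appearing on the right-hand side is well defined.

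The main geometric step is to describe the tangent direction along $\gamma\sb{t}$. Setting
\[ (a,b):=\Hess w\,\diff w=(w\sb{xx}w\sb{x}+w\sb{xy}w\sb{y},\;w\sb{xy}w\sb{x}+w\sb{yy}w\sb{y}), \]
one has $\diff(\gm{w}\sp{2})=2(a,b)$, so (with the prescribed positive orientation) the unit tangent to $\gamma\sb{t}$ is
\[ \tau=\dfrac{(-b,a)}{|\Hess w\,\diff w|}, \]
and consequently $dx=\tau\sb{1}\,ds$, $dy=\tau\sb{2}\,ds$ along $\gamma\sb{t}$.

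Substituting $dw\sb{x}=w\sb{xx}dx+w\sb{xy}dy$ and $dw\sb{y}=w\sb{xy}dx+w\sb{yy}dy$ into the one-form on the left, I would obtain
\[ w\sb{x}dw\sb{y}-w\sb{y}dw\sb{x}=\bigl[-b(w\sb{x}w\sb{xy}-w\sb{y}w\sb{xx})+a(w\sb{x}w\sb{yy}-w\sb{y}w\sb{xy})\bigr]\,\dfrac{ds}{|\Hess w\,\diff w|}, \]
so that the lemma reduces to the pointwise algebraic identity
\[ -b(w\sb{x}w\sb{xy}-w\sb{y}w\sb{xx})+a(w\sb{x}w\sb{yy}-w\sb{y}w\sb{xy})=\gm{w}\sp{2}\,\det(\Hess w). \]
Expanding $a$ and $b$, the terms proportional to $w\sb{x}w\sb{y}w\sb{xx}w\sb{xy}$ and to $w\sb{x}w\sb{y}w\sb{xy}w\sb{yy}$ cancel in pairs, leaving $(w\sb{x}\sp{2}+w\sb{y}\sp{2})(w\sb{xx}w\sb{yy}-w\sb{xy}\sp{2})$, which is exactly $\gm{w}\sp{2}\det(\Hess w)$. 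Dividing by $\gm{w}\sp{2}$ and integrating over $+\gamma\sb{t}$ yields the formula.

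The only real obstacle is keeping track of signs in the algebraic cancellation and matching the orientation convention implicit in $+\gamma\sb{t}$; conceptually the lemma is nothing more than a two-dimensional vector-calculus identity, whose geometric input is the explicit form of the unit tangent to a smooth level set of $\gm{w}$.
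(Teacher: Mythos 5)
Your proof is correct and takes essentially the same route as the paper: both reinterpret $\gamma_t$ as a level set of $\gm{w}$ (as the regularity hypothesis and the coarea argument downstream require), parametrize by arc length via the unit tangent orthogonal to $\diff(\gm{w}\sp2)=2\,\Hess w\,\diff w$, substitute $(dx,dy)=\tau\,ds$ into the one-form, and reduce to the pointwise identity whose expansion you verify. The paper simply writes down the same $(dx,dy)$ substitution and leaves the final cancellation as ``simple algebraic manipulations,'' which is exactly the computation you spell out.
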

\begin{proof}
  We have
  \begin{equation*}
    \int\sb{+\gamma\sb{t}}\dfrac{w\sb{x} dw\sb{y} - w\sb{y}
      dw\sb{x}}{\gm{w}^2} = %
    \int\sb{+\gamma\sb{t}}\dfrac{(w\sb{x} w\sb{xy} - %
      w\sb{y} w\sb{xx})dx + %
      (w\sb{x} w\sb{yy} - w\sb{y} w\sb{xy})dy
    }{\gm{w}^2}.
  \end{equation*}
  Since $t$ is a regular value, $\gamma\sb{t}$ is made of regular
  curves and
  \begin{equation*}
    \left(\begin{matrix}
      dx \\
      dy
    \end{matrix}\right) = %
    \dfrac{ds}{|\Hess w\diff w|}%
    \left(\begin{matrix}
            -w\sb{x}w\sb{xy} -w\sb{y}w\sb{yy} \\
  \phantom{-}w\sb{x}w\sb{xx} +w\sb{y}w\sb{xy}
          \end{matrix}\right)
  \end{equation*}
  on $\gamma\sb{t}$. The conclusion follows at once after simple
  algebraic manipulations.
\end{proof}

\begin{thm}\label{distributionalSolution:hasNoIsolatedCriticalPoint}
  Let $u$ be a $C\sp1$ solution in the sense of distributions of
  \eqref{divFormForU}. Assume $f$ satisfies
  \eqref{eq:cond:f}.\cri
  Then $u$ can not have isolated critical points.
\end{thm}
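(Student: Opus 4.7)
The plan is to argue by contradiction: suppose $z_0\in\Omega$ is an isolated critical point of $u$. Theorem \ref{v:c1} already yields $\ii(z_0)=0$, and the structural analysis in its proof identifies the level set $\gamma=\{u=u(z_0)\}$ near $z_0$ as a single simple $C^1$-curve through $z_0$ (the case $L=1$ of Theorem \ref{th:ex:alessandriniMagnanini}), with $u-u(z_0)$ of opposite strict sign on each of its two sides.

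Since $z_0$ is isolated, $|\nabla u|>0$ throughout a punctured neighborhood $\mathcal{U}\setminus\{z_0\}$, where \eqref{divFormForU} is a strictly elliptic equation whose nondivergence form is \eqref{nondivu}. Standard interior regularity gives $u\in C^{2,\lambda}_{\loc}(\mathcal{U}\setminus\{z_0\})$, and diagonalizing the positive-definite matrix $\mathcal{A}(\nabla u)$ in the identity $\tr(\mathcal{A}\,\nabla^2 u)=0$ produces the pointwise inequality $\det(\nabla^2 u)\le 0$ on $\mathcal{U}\setminus\{z_0\}$, with equality exactly where $\nabla^2 u$ vanishes.

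I would next combine this sign constraint with Lemma \ref{indexFormula}. Choose a sequence of regular values $t_n\to u(z_0)$ (Sard's theorem). The 1-form $\omega=(u_x\,du_y-u_y\,du_x)/|\nabla u|^2$ is closed on $\mathcal{U}\setminus\{z_0\}$ by \eqref{divFormForU}, and its circulation on a small circle about $z_0$ is $2\pi\,\ii(z_0)=0$. A Stokes-type argument on regions bounded by arcs of $\gamma_{t_n}$ and complementary arcs of the small circle --- using $\gamma$ to split the circle into the two pieces where $u\gtrless u(z_0)$ --- transfers the vanishing circulation to $\int_{\gamma_{t_n}}\omega=0$ for every $n$. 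By Lemma \ref{indexFormula}, the latter equals $\int_{\gamma_{t_n}}\det(\nabla^2 u)/|\nabla^2 u\,\nabla u|\,ds=0$, and since the integrand is non-positive it must vanish along $\gamma_{t_n}$. Density of regular values and continuity then yield $\det(\nabla^2 u)\equiv 0$, hence $\nabla^2 u\equiv 0$, on $\mathcal{U}\setminus\{z_0\}$.

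Consequently $\nabla u$ is constant on the connected set $\mathcal{U}\setminus\{z_0\}$, and continuity of $\nabla u$ together with $\nabla u(z_0)=0$ forces $\nabla u\equiv 0$ on $\mathcal{U}$; hence $u$ is constant there, every point is critical, and this contradicts the isolation of $z_0$. The main technical difficulty I foresee is the Stokes step that converts the zero circulation around $z_0$ into a vanishing integral over the \emph{open} arcs $\gamma_{t_n}$: because the $L=1$ geometry forbids $\gamma_{t_n}$ from encircling $z_0$, one must split the auxiliary circle by $\gamma$, track orientations with care, and control the singular behaviour of $\omega$ as $t_n\to u(z_0)$, leaning on both the closedness of $\omega$ and the continuous extension of the stream function's gradient to $z_0$ granted by Lemma \ref{v:existence}.
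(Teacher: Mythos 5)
The proposal departs from the paper at exactly the point where it uses level sets of $u$, and that departure is fatal. Lemma \ref{indexFormula} is stated (and proved) for level sets of $\gm{w}$, not of $w$: the tangent vector used in its proof, $(dx,dy)\propto(-w_x w_{xy}-w_y w_{yy},\;w_x w_{xx}+w_y w_{xy})\,ds/|\Hess w\,\diff w|$, is orthogonal to $\nabla\big(\gm{w}^2\big)=\Hess w\,\diff w$, so $\gamma_t$ there is $\{\gm{w}=t\}$ (the notation ``$w(z)=t$'' in its statement is a misprint). On a level set of $u$ one instead gets $\dfrac{u_x\,du_y-u_y\,du_x}{\gm{u}^2}=\dfrac{\langle\Hess u\,\nabla^\perp u,\nabla^\perp u\rangle}{\gm{u}^3}\,ds$, a second tangential derivative of $u$, which is \emph{not} $\det(\Hess u)/|\Hess u\,\diff u|\,ds$. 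So the key identity you invoke does not hold on your curves $\gamma_{t_n}=\{u=t_n\}$, and the conclusion $\det(\Hess u)\equiv0$ does not follow from your computation.

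A second, independent problem is the Stokes step. The level sets $\{u=t_n\}$ near $\z$ are \emph{open arcs} (precisely because $L=1$ means they do not encircle $\z$), so the closedness of $\omega$ and the vanishing winding number do not by themselves force $\int_{\gamma_{t_n}}\omega=0$: closing $\gamma_{t_n}$ with an arc of the auxiliary circle gives $\int_{\gamma_{t_n}}\omega=-\int_{\text{circle arc}}\omega$, and there is no reason for the right-hand side to vanish. The paper avoids both difficulties by working with the level sets of $\gm{u}$ instead: $\z$ is a strict minimum of $\gm{u}$, so for regular values $t$ close to $0$ the sets $\{\gm{u}=t\}$ near $\z$ are \emph{closed} curves encircling $\z$; on these, $\int\omega=2\pi\,\ii(\z)=0$ \emph{is} a genuine winding-number computation, Lemma \ref{indexFormula} applies verbatim, and the coarea formula (applied to $\gm{u}$, so the Jacobian $|\Hess u\,\diff u|/\gm{u}$ appears and cancels correctly) yields $\int_{A_\varepsilon}\det(\Hess u)/\gm{u}=0$. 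To fix your argument you would have to switch to level sets of $\gm{u}$ throughout, at which point you would be reproducing the paper's proof. Your final step (from $\det(\Hess u)\equiv 0$ and ellipticity to $\Hess u\equiv 0$) is fine and is equivalent to the paper's use of Bernstein's inequality.
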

\begin{proof}
  Let $\Omega'$ denote the open subset of $\Omega$ in which
  $\gm{u}>0$. On any open subset $A$ of $\Omega'$ whose closure is
  contained in $\Omega'$, $\gm{u}$ is bounded away from zero; thus we
  can apply ~\cite[Thm. 10.18]{giustiDirectMethods} to infer that $u$ has
  H\"older continuous second derivatives in any such $A$ and hence in
  $\Omega'$ (notice that, under our assumptions \eqref{eq:cond:f} on $\f$,
  it is a standard computation to prove that
  $u\in\wss(\Omega')$). Also, on any such $A$, Sard's lemma in the
  version of 
  \cite{korobkov} may be applied to $\gm{u}^2$ in $A$; in particular, we have
  that
  \begin{equation}
    \label{eq:coarea}
    \int\sb{A}\dfrac{\det(\Hu)}{\gm{u}}dxdy=\int\sb{m}\sp{M}\bigg(\int\sb{\gamma\sb{t}}
    \dfrac{\det(\Hu)}{|\Hu\diff u|}\bigg)dt
  \end{equation}
  by the coarea formula (see ~\cite[Thm. 1,
  Sec. 3.4.2]{evansGariepy}), where
  $m=\displaystyle\min_{\overline{A}}\gm{u}$,
  $M=\displaystyle\max_{\overline{A}}\gm{u}$.\cri
  Now $\z$ is a strict minimum point for $\gm{u}$ such that the
  connected component $\tilde{A}$ of
  $\{z\in\Omega':\gm{u(z)}<\varepsilon\sb{0}\}$ containing $\z$ is
  bounded by a simple closed curve. We now choose $A\sb{\varepsilon} =
  \{z\in\tilde{A}: \varepsilon<\gm{u(z)}<\varepsilon\sb{0}\}$ for
  $0<\varepsilon<\varepsilon\sb{0}$ and apply \eqref{eq:coarea} and
  Lemma \ref{indexFormula}; we get
  \begin{equation*}
    \int\sb{A\sb{\varepsilon}}\dfrac{\det(\Hu)}{\gm{u}}\, dxdy = %
    \int\sb{\varepsilon}\sp{\varepsilon\sb{0}}%
      \bigg(%
        \int\sb{\gamma\sb{t}}\dfrac{u\sb{x}du\sb{y}-u\sb{y}du\sb{x}}{\gm{u}^2}%
      \bigg)\, dt.
  \end{equation*}
  Formula \eqref{def:index} and Theorem \ref{v:c1} yield that the integrand on the
  right-hand side of the latter is null for almost every
  $t\in(\varepsilon,\varepsilon\sb{0})$, hence 
  \begin{equation*}
    \int\sb{A\sb{\varepsilon}}\dfrac{\det(\Hu)}{\gm{u}}dxdy = 0.
  \end{equation*}
  Now, $\det(\Hu)\le0$ in $A\sb{\varepsilon}$, because $u$ satysfies
  an elliptic equation; thus $\det(\Hu)\equiv0$ in $A\sb{\varepsilon}$
  and hence in $A\sb{0}$, since $\varepsilon\in(0,\varepsilon\sb{0})$
  is arbitrary. An application of Bernstein's inequality (\eg see
  ~\cite[Problem 12.3]{gilbargTrudinger} or
  ~\cite{talentiOnBernstein}) yields that
  \begin{equation*}
   |\na^2 u|^2 \le 
     -\det(\na^2 u)\,\big[\aaalim(\gm{u}) + \aaalim(\gm{u})^{-1}\big] 
% u\sb{xx}^2 + 2u\sb{xy}^2 + u\sb{yy}^2 \le %
    %  \big[\aaalim(\gm{u}) + \aaalim(\gm{u})^{-1}\big] %
     % \big(u\sb{xy}^2-u\sb{xx}u\sb{yy}\big)
  \end{equation*}
  and hence that $u$ is affine in $A\sb{0}$ and, by continuity,
  constant in a whole neighborhood of $\z$ (recall that
  $\gm{u(\z)}=0$). This is a contradiction.
\end{proof}

%\begin{labeledThm}{\ref{minimizer:hasNoIsolatedCriticalPoint}}
 % Let $u$ be a $C\sp1$ minimizer for the functional
  %\begin{displaymath}
   % \int\sb{\Omega}\Fb(\gm{u})dxdy,
  %\end{displaymath}
  %where $\fb$ satisfies \eqref{eq:cond:f}.\cri 
  %Then $u$ can not have isolated critical points.
%\end{labeledThm}
%\marginpar{mettere: Proof of Theorem 1.1.}
%{Thm. \ref{minimizer:hasNoIsolatedCriticalPoint}}
\begin{proof}[Proof of Theorem \ref{minimizer:hasNoIsolatedCriticalPoint}]
  By Lemma \ref{bridgeToMinimizers} $u$ satisfies \eqref{divFormForU}
  in the sense of distributions. Therefore Theorem
  \ref{distributionalSolution:hasNoIsolatedCriticalPoint} applies.
\end{proof}
\begin{rmk}
  It is easy to see that a result similar to that of Theorem \ref{minimizer:hasNoIsolatedCriticalPoint}
can be proved also for $C^1$ solutions of \eqref{stream}: if $(u,v)$ is a 
$C^1$ pair satisfying \eqref{stream}, then $u$ cannot have isoltated critical points. Indeed,
  in this case Theorem \ref{v:c1} holds for $v$ taken as the second component of the 
solution pair.
\end{rmk}
 \begin{rmk}
   A slight modification of the example in \cite[Sec. 2.2]{magnaniniTalenti1}
   proves that there exist solutions of \eqref{varpb} whose set of
   critical points has zero Lebesgue measure. Indeed the function $u$
   defined by
   \begin{equation*}
     u(x,y)= %
     \left\{%
       \begin{aligned}
         1-\sqrt{\dfrac{1-x^2-y^2}{2} %
                 +\dfrac{1}{2}\sqrt{(1-x^2-y^2)^2+4y^2}%
           } &\qquad x\ge0, \\
         -1+\sqrt{\dfrac{1-x^2-y^2}{2} %
                 +\dfrac{1}{2}\sqrt{(1-x^2-y^2)^2+4y^2}%
           } &\qquad x<0, \\
       \end{aligned}%
     \right.
   \end{equation*}
   is a distributional solution of class $C\sp{1,1}$ of \eqref{divFormForU} in
   every ball $\ball{0}{R}$, $0<R<1$, and hence solves \eqref{varpb}
   with $\psi=u\sb{|\sb{\partial\ball{0}{R}}}$.
 \end{rmk}

\section{Stream Functions and Viscosity Approximations}
\label{approx}
%\marginpar{Da rileggere}

The reader should refer to \cite{viscosityUserGuide} for a definition and
relevant developments concerning
viscosity solutions (\cite{koike}
for an account more appropriate to novices). Lemma
\ref{correctedViscousFormForGeneralWithModulus} resumes the 
only non-standard property of viscosity solutions needed in the
following.

Here, we shall analyse the relationships between functional
\eqref{defJ} and equation \eqref{nondivu}; we will also set up a
framework that, in certain instances, leads to the construction of a
stream function associated to the unique solution of \eqref{varpb}.

We shall assume that $f$ fulfills \eqref{eq:cond:f} and that the ratio
$\dfrac{\rho\f''(\rho)}{\f'(\rho)}$ has finite limits as
$\rho\to0\sp+$ and $\rho\to+\infty$. It is easy to show that on
account of \eqref{eq:cond:fPrime}, the first limit is zero. Thus, we
adopt the definition \eqref{alphaIntro} of the function
$\fun{\aaalim}{\Rhc}{\Rhc}$ and we assume
\begin{equation}
  \label{def:alphaToInfty}
  \aaainfty:=\lim\sb{\rho\to+\infty}\dfrac{\rho\f''(\rho)}{\f'(\rho)} >0.
\end{equation}

Observe incidentally that \eqref{def:alphaToInfty} and \eqref{eq:cond:fRegularity}
imply that there exist constants $q\sb1, q\sb2\in(1,+\infty),
q\sb1<q\sb2$ and $c\sb1, c\sb2>0$ such that, for $\rho\ge0$
\begin{align*}
  c\sb1\rho^{q\sb1}-c\sb2\le&\f(\rho)\le c\sb1\rho^{q\sb2} + c\sb2, \\
  c\sb1\rho^{q\sb1-1}-c\sb2\le&\f'(\rho)\le c\sb1\rho^{q\sb2-1} + c\sb2.
\end{align*}

As starters, we recall a classical result, providing a short proof of
it tailored on our purposes.
We will say that a couple $(\Omega, \psi),$ where $\psi:\pa\Omega\to\RR$ is
a continuous function, satisifies a {\it bounded slope condition} (\bsc{} for short) with constant $Q>0$ 
if, for every $x_0\in\pa\Om$ 
there exist two {\it affine functions} $L^+$ and $L^-$ such that
\begin{equation}
  \label{bsc}  
  \begin{aligned}
    &L^-\le\psi\le L^+ \ \mbox{ in } \ \pa\Om,\\
    &L^-(x_0)=\psi(x_0)=L^+(x_0),\\
    &\sup\limits_{x,y\in\Om\atop x\not= y}\frac{|L^\pm(x)-L^\pm(y)|}{|x-y|}\le Q.
  \end{aligned}
\end{equation}
\begin{prop}
  \label{basicFactsAboutApproxForU}
Let $(\Omega, \psi)$ satisfy a \bsc{} with constant $Q.$ Assume $(\fa)\sb{\adom}$ is a sequence of strictly convex functions
converging uniformly to $f$ on $\Rhc$.  Let $u$
(resp. $\ua$) be the unique solution of \eqref{varpb} for $\jj$
(resp. for $\ja$ in \eqref{eq:approxFunctionalForU}). \cri
Then 
 \begin{enumerate}[(a) ]
  \item\label{cond:minimizing} $\ua$ is a minimizing sequence
    for $\jj$ and $\ja(\ua)\to\jj(u)$;
  \item\label{uApproxApproxU} the sequence
    $(\ua)\sb{\adom}$ tends to $u$ in the \emph{sup norm} topology,
    and in the weak* topology of $\wi$.
  \end{enumerate}
\end{prop}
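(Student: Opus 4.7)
The plan is to exploit the \bsc{} to obtain uniform Lipschitz bounds on the minimizers $\ua$ and $u$. By the classical Hilbert--Haar theorem (see \eg \cite{giustiDirectMethods}), under the \bsc{} with constant $Q$ any minimizer of a strictly convex functional of the form \eqref{defJ} subject to the Dirichlet datum $\psi$ is Lipschitz on $\overline{\Om}$ with Lipschitz constant at most $Q$. The key observation is that the constant $Q$ produced by \bsc{} depends only on $\Om$ and $\psi$, not on the Lagrangian, so the bound $\|\na \ua\|\sb{L\sp\infty(\Om)} \le Q$ holds uniformly in $\a$, and likewise $\|\na u\|\sb{L\sp\infty(\Om)} \le Q$. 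This uniform bound is the linchpin of the whole argument.

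For part \eqref{cond:minimizing}, I would use the uniform convergence $\fa \to f$ restricted to the compact interval $[0,Q]$. Since $u$ is admissible for every $\ja$ and $|\na u| \le Q$ a.e., one has
\begin{equation*}
  |\ja(u) - \jj(u)| \le |\Om|\, \sup\sb{[0,Q]} |\fa - f| \to 0,
\end{equation*}
and likewise $|\ja(\ua) - \jj(\ua)| \to 0$ uniformly in $\a$. Combining these with $\ja(\ua) \le \ja(u)$ (minimality of $\ua$ for $\ja$) and $\jj(u) \le \jj(\ua)$ (minimality of $u$ for $\jj$), I would sandwich
\begin{equation*}
  \jj(u) \le \jj(\ua) \le \ja(\ua) + o(1) \le \ja(u) + o(1) = \jj(u) + o(1),
\end{equation*}
so that $\jj(\ua) \to \jj(u)$ and $\ja(\ua) \to \jj(u)$ both follow at once.

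For part \eqref{uApproxApproxU}, the common Lipschitz bound $Q$ makes $(\ua)\sb{\adom}$ equi-Lipschitz on $\overline{\Om}$, hence equi-continuous and, since the boundary values are pinned to $\psi$, uniformly bounded. Arzel\`a--Ascoli extracts from any subsequence a further subsequence converging uniformly on $\overline{\Om}$ to some $\tilde u$, still Lipschitz with constant $\le Q$ and matching $\psi$ on $\partial\Om$. Weak lower semicontinuity of the convex integral $\jj$ on $W\sp{1,\infty}(\Om)$, together with the minimizing property already established in \eqref{cond:minimizing}, yields $\jj(\tilde u) \le \liminf\sb{\a\to\alim} \jj(\ua) = \jj(u)$, and uniqueness of the minimizer of \eqref{varpb} forces $\tilde u = u$. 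A standard subsequence argument upgrades this to uniform convergence of the entire sequence $(\ua)\sb{\adom}$ to $u$. The weak* convergence in $\wi$ is then automatic from the uniform Lipschitz bound (Banach--Alaoglu yields weak* compactness, and the weak* limit must coincide with $u$, already identified via sup-norm convergence).

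The only subtlety I anticipate is ensuring that the \bsc{} delivers a Lipschitz constant $Q$ that is truly uniform in $\a$; everything else is a routine sandwich estimate combined with a compactness-plus-uniqueness argument. Once the $\a$-uniform Lipschitz bound is in hand, both parts fall into place essentially mechanically.
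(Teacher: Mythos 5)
Your proof is correct and follows essentially the same route as the paper: the paper cites the Hilbert--Haar theory (Thm.\ 1.2 in \cite{giustiDirectMethods}) to obtain the uniform bound $\gm{\ua}\le Q$, deems part (a) ``standard'' from the uniform convergence $\ja\to\jj$ (which is exactly your sandwich estimate, valid even on all of $\Rhc$ since $\fa\to f$ uniformly there), and concludes (b) via Ascoli--Arzel\`a together with a weak-compactness/lower-semicontinuity argument. You simply make explicit the compactness-plus-uniqueness step that the paper leaves to the reader.
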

%\marginpar{Manca la dimo della weak* topology?}
\begin{proof}
  Since $\ja\to\jj$ uniformly \eqref{cond:minimizing} is standard.
  To prove \eqref{uApproxApproxU} apply ~\cite[Thm
  1.2]{giustiDirectMethods} to get the bound 
\begin{equation}
\label{BdGrads}
\gm{\ua}\le Q \quad \mbox{ on } \ \overline{\Omega}.
\end{equation} 
Then \cite[Ch. 2,
    B.1]{evansWeakConvergence} and an 
    application of the Ascoli-Arzel\`a's theorem yields the desired
    conclusion.
\end{proof}

In order to prove our next proposition, we need a preliminary lemma. 
We state it in quite a general form, since it will also be used further
along, while considering the case of stream functions. The proof of
Lemma \ref{correctedViscousFormForGeneralWithModulus} follows the
lines of that of \cite[Prop. 2.3]{ishibashiKoike} (which may also be considered as
an appropriate source for a more advanced discussion of the 
 current topic)

\newcommand{\np}{\nabla\sb{p}}
\begin{lemma}\label{correctedViscousFormForGeneralWithModulus}
  Let $\fun{M}{\Rn}{\Rn}$ be a monotone operator such that the mapping
  $p\mapsto\dfrac{|p|^3}{|M(p)|}\np M(p)$ is continuous on $\Rn$. \cri
  Then every weak solution of
  \begin{equation}
    \label{eq:generalEulerLagrangeWithModulus}
    \dv\big(M(\diff u)\big)=0
  \end{equation}
  which is continuous and lies in $\wq$ for some $q>1$ is also a
  viscosity solution of 
  \begin{equation}
    \label{eq:inftyLaplacianGeneralWithModulus}%
    -\dfrac{\gm{u}^3}{|M(\diff u)|} %
    \tr\big\{\np M(\diff u) \Hu\big\}= 0,
  \end{equation}
  where $\tr$ is the usual trace operator on matrices.
\end{lemma}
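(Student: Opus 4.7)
My plan is to argue by contradiction and convert the violation of the viscosity inequality into a comparison contradiction in the weak formulation, in the spirit of \cite{ishibashiKoike}. Suppose $u$ is a continuous weak solution of \eqref{eq:generalEulerLagrangeWithModulus} but fails to be a viscosity subsolution of \eqref{eq:inftyLaplacianGeneralWithModulus} (the supersolution case is symmetric). Then there exist a test function $\phi\in C\sp{2}$ and a point $\x\in\Omega$ at which $u-\phi$ attains a strict local maximum, with $u(\x)=\phi(\x)$, and such that
\[
  -\dfrac{|\diff\phi(\x)|\sp{3}}{|M(\diff\phi(\x))|}\tr\bigl\{\np M(\diff\phi(\x))\,\Hess\phi(\x)\bigr\}>0.
\]
By the continuity hypothesis on $p\mapsto\frac{|p|\sp{3}}{|M(p)|}\np M(p)$, this strict inequality persists on some closed ball $\clball{\x}{r}\subset\Omega$; wherever $\diff\phi\neq 0$ on that ball it translates into the classical pointwise inequality $\dv\bigl(M(\diff\phi)\bigr)<0$.

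I would then set up the standard barrier argument. Shifting $\phi$ downward to $\tilde\phi=\phi-\tfrac12\delta$, with $\delta=\min\sb{\pa\ball{\x}{r}}(\phi-u)>0$, makes $(u-\tilde\phi)\sp{+}$ nonnegative, strictly positive near $\x$, compactly supported in $\ball{\x}{r}$, and hence admissible in $W\sp{1,q}\sb{0}(\ball{\x}{r})$. Testing the weak equation \eqref{eq:generalEulerLagrangeWithModulus} for $u$ against it yields
\[
  \int\sb{\{u>\tilde\phi\}}M(\diff u)\cdot\diff(u-\tilde\phi)\,dx=0,
\]
while Green's formula applied to the smooth $\tilde\phi$, combined with $-\dv\bigl(M(\diff\tilde\phi)\bigr)>0$, produces
\[
  \int\sb{\{u>\tilde\phi\}}M(\diff\tilde\phi)\cdot\diff(u-\tilde\phi)\,dx>0.
\]
Subtracting the two identities forces
\[
  \int\sb{\{u>\tilde\phi\}}\bigl(M(\diff u)-M(\diff\tilde\phi)\bigr)\cdot\diff(u-\tilde\phi)\,dx<0,
\]
in direct contradiction with the pointwise monotonicity inequality $\bigl(M(\diff u)-M(\diff\tilde\phi)\bigr)\cdot\bigl(\diff u-\diff\tilde\phi\bigr)\ge0$, which holds a.e.

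The main obstacle lies in the very first step: turning the scaled viscosity inequality into the classical pointwise inequality $\dv\bigl(M(\diff\phi)\bigr)<0$ throughout the ball. At zeros of $\diff\phi$ the normalizing factor $|\diff\phi|\sp{3}/|M(\diff\phi)|$ also vanishes, so the scaled inequality carries no information there. I would dispose of this by reducing to test functions whose gradient does not vanish on $\clball{\x}{r}$: either through a small perturbation of $\phi$ that preserves the strict local maximum and, by continuity of the normalized coefficient, the strict viscosity inequality at $\x$, or through the doubling-variables device as adapted in \cite{ishibashiKoike} to quasilinear equations of this type. Once $\diff\phi$ avoids zero on $\clball{\x}{r}$, the scaled and the divergence-form inequalities coincide, and the comparison chain above closes the argument.
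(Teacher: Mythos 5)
Your overall plan---argue by contradiction, integrate the pointwise inequality for $\phi$ against the truncated test function $(u-\tilde\phi)^+$, subtract the weak identity for $u$, and invoke the monotonicity of $M$---is the same as the paper's, and your concluding comparison chain matches it step for step. The genuine gap lies exactly where you flag the ``main obstacle,'' and the fixes you propose there (perturbing $\phi$, or doubling variables, to force $\nabla\phi\neq 0$ on the ball) are both heavier than needed and not actually carried out.

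The paper's resolution is much simpler: the strict scaled inequality at $\hat x$ \emph{already} implies $\nabla\phi(\hat x)\neq 0$. Indeed, the continuity hypothesis on $p\mapsto\dfrac{|p|^3}{|M(p)|}\nabla_p M(p)$ forces this coefficient matrix to degenerate as $p\to 0$, so the scaled left-hand side of \eqref{eq:inftyLaplacianGeneralWithModulus} vanishes at any zero of $\nabla\phi$ and cannot be strictly positive there. Once $\nabla\phi(\hat x)\neq 0$ is in hand, continuity of $\nabla\phi$ lets you shrink the radius so that $\nabla\phi$ never vanishes on the closed ball; on that ball the scaled and divergence-form inequalities coincide, and the rest of your argument closes. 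The perturbation route you sketch would instead require simultaneously keeping the strict contact at $\hat x$, preserving the strict violation of the inequality, and removing every critical point of the test function from a fixed ball---none of which you establish. Doubling of variables is the tool for equations in which the test gradient may genuinely vanish at the contact point; the scaling built into this lemma is designed precisely so that that cannot happen.
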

\begin{proof}
  As usual for the viscosity setting, the proof splits into two steps:
  first prove that $u$ is a subsolution, then that $u$ is a
  supersolution. The two steps are
  nearly identical, thus we only go through the first one.

  Assume by contradiction, that there exist $\x\in\Omega$ and
  $\phi\in C\sp2(\Omega)$ satisfying
  \begin{center}
    $u(\x) = \phi(\x)$, \qquad%
    $\phi(x) > u(x)$, $x\in\Omega\setminus\{\x\}$,
  \end{center}
    \begin{equation}
      \label{cond:absurdForGeneralWithModulus} 
      -\dfrac{\gm{\phi(\x)}^3}{|M(\diff\phi(\x))|} %
      \tr\big\{\np M(\diff\phi(\x)) \Hess\phi(\x)\big\} > 0.
    \end{equation}
  By our assumptions on $M$ and $\phi$, the last inequality implies
  that $\gm{\phi(\x)}>0$. Hence, by continuity, we can find numbers
  $\theta, r, t>0$ with $\theta, r>t$ such that 
  \begin{center}
    $\overline{\br}\subset\Omega$, \qquad %
    $\displaystyle\min\sb{\bt}(u-\phi)\ge -\theta$, \qquad %
    $\displaystyle\max\sb{\partial\br}(u-\phi)\le -3\theta$, \\
    $-\tr\big\{\np M(\diff\phi(\x)) \Hess\phi(\x)\big\}> 2\theta$, \qquad%
    $-\tr\big\{\np M(\diff\phi) \Hess\phi\big\} > \theta$ on $\br$
  \end{center}
  (here $\br$ denotes the ball of radius $r$ centered at $\x$).

  Given a non-negative $\psi\in\wqcs$, we integrate by parts on $\br$
  the last inequality and obtain
  \begin{equation}
    \label{eq:integratedViscous}
    \begin{split}
      \int\sb{\br}\langle M(\diff\phi(x)), \diff\psi(x)\rangle =%
      \qquad \qquad \quad\\
      \int\sb{\br}-\tr\big(\J M(\diff\phi(x))\Hphi(x)\big)\cdot\psi(x)%
      \ge %
      \int\sb{\br}\theta\cdot\psi(x).
    \end{split}
  \end{equation}

  Now we choose 
  \begin{displaymath}
    \psi(x)=%
      \begin{cases}
        \big(u(x) - \phi(x) + 2\theta\big)\sp+ &\ \text{for any
                                                }x\in\br,\\
        0 &\ \text{elsewhere};
      \end{cases}
  \end{displaymath}
  then $\psi\in\wqcs$ and 
  \begin{displaymath}
    \diff\psi(x)=%
      \begin{cases}
        \diff u(x) - \diff\phi(x) &\ \text{for any %
        }x\in\br\cap\{u-\phi>2\theta\},\\ 
        0 &\ \text{elsewhere}
      \end{cases}
  \end{displaymath}
  (see \eg ~\cite[(3.8) pg. 86]{giustiDirectMethods}). Thus, by
  \eqref{eq:integratedViscous}, we have 
  \begin{displaymath}
    \label{eq:ansatzIntegratedViscous}
    \begin{split}
      \int\sb{\br\cap\{u-\phi>2\theta\}}%
      \!\!\!\!\!\!\!\!\!\!\!\!%
      \langle M(\diff\phi),\diff u-\diff\phi\rangle \ge %
      \!\!\!\!\!\!\!\!\!\!\!\!%
      \int\sb{\br\cap\{u-\phi>2\theta\}}%
      \!\!\!\!\!\!\!\!\!\!\!\!%
      \theta(u-\phi+2\theta) \ge %
      \int\sb{\bt}\theta^2,
    \end{split}
  \end{displaymath}
  while, since $u$ is a weak solution of
  \eqref{eq:generalEulerLagrangeWithModulus}, it holds that
  \begin{displaymath}
    \label{eq:ansatzEulerLagrangeInner}
    \int\sb{\br\cap\{u-\phi>2\theta\}}%
    \!\!\!\!\!\!\!\!\!\!\!\!%
      \langle M(\diff u), \diff u-\diff\phi\rangle = 0.
  \end{displaymath}
   
  By subtracting the latter equation to the earlier one we get a
  contradiction:
  \begin{displaymath}
    \begin{split}
      0\ge%
      \!\!\!\!\!\!\!\!\!\!\!\!%
      \int\sb{\br\cap\{u-\phi>2\theta\}}%
      \!\!\!\!\!\!\!\!\!\!\!\!%
      -\langle M(\diff u)-M(\diff\phi), \diff u-\diff\phi\rangle \ge%
          \int\sb{\bt}\theta^2,
    \end{split}
  \end{displaymath}
  (here the first inequality follows from the monotonicity of $M$).
\end{proof}

We now introduce some further assumptions on the approximating
sequence $(\fa)\sb{\adom}$ considered in Proposition
\ref{basicFactsAboutApproxForU} and prove a couple
of preliminary results.

\begin{thm}\label{viscosityApproxForU}
Assume that the $f\sb{\a}$'s satisfy \eqref{eq:cond:fConvex}-\eqref{eq:cond:fRegularity} and
\begin{equation}
  \label{hp:regularityInTheLargeSet}
f'\sb{\a}(0)=0, \ n\in\NN,
\end{equation}
and let $\ua$ be
  the solution of problem \eqref{varpb} for $\ja$ in \eqref{eq:approxFunctionalForU}.
\par
Then $\ua$ is a viscosity solution in $\Omega$ of
  \begin{equation}\label{eq:apprnondiv}
    -\big[\aaa(\gm{u})-1\big]\lp\sb{\infty}u - \gm{u}^2\lp u = 0,
  \end{equation}
  where $\aaa$ is given by \eqref{aalfa}.
\end{thm}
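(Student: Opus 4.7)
The plan is to invoke Lemma \ref{correctedViscousFormForGeneralWithModulus} with the vector field
\[
M(p)=\fa'(|p|)\,\frac{p}{|p|}\quad(p\neq 0),\qquad M(0)=0.
\]
Because $\fa'(0)=0$ by hypothesis \eqref{hp:regularityInTheLargeSet}, this extension is continuous; moreover the strict convexity \eqref{eq:cond:fConvex} of $\fa$ makes $M$ (strictly) monotone, as it is the gradient of the convex function $p\mapsto \fa(|p|)$. The absence of the singular term $\fa'(0)\int_{\{|\nabla u|=0\}}|\nabla\phi|$ in $\partial\JJ\sb\a(\ua)(\phi)$ (compare Lemma \ref{bridgeToMinimizers}) then makes $\ja$ Gateaux-differentiable on $\wi$, so the minimizer $\ua$ is a bona fide weak solution of
\[
\dv\bigl(M(\diff\ua)\bigr)=0\quad\text{in }\Om.
\]
The required integrability $\ua\in\wq$ for some $q>1$ comes for free from the Lipschitz bound \eqref{BdGrads} supplied by the \bsc.

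Next I would verify the hypothesis of Lemma \ref{correctedViscousFormForGeneralWithModulus} by a direct computation. For $p\neq 0$ one finds
\[
\np M(p)=\fa''(|p|)\,\frac{p\otimes p}{|p|\sp2}
+\frac{\fa'(|p|)}{|p|}\Bigl(I-\frac{p\otimes p}{|p|\sp2}\Bigr),
\qquad |M(p)|=\fa'(|p|),
\]
so that
\[
\frac{|p|\sp3}{|M(p)|}\,\np M(p)=\bigl[\aaa(|p|)-1\bigr]\,p\otimes p+|p|\sp2 I.
\]
Since $\aaa$ admits the finite limit prescribed by \eqref{aalfa} at $\rho=0$, the right-hand side extends continuously to $p=0$ (indeed it vanishes there). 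Hence Lemma \ref{correctedViscousFormForGeneralWithModulus} applies and yields that $\ua$ is a viscosity solution of
\[
-\frac{|\diff\ua|\sp3}{|M(\diff\ua)|}\tr\bigl\{\np M(\diff\ua)\,\Hu\sb\a\bigr\}=0.
\]

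It remains to identify this equation with \eqref{eq:apprnondiv}. Contracting the formula for $\np M$ against $\Hu\sb\a$ and multiplying by $|\diff\ua|\sp3/\fa'(|\diff\ua|)$ gives
\[
-\frac{|\diff\ua|\sp3}{|M(\diff\ua)|}\tr\bigl\{\np M(\diff\ua)\,\Hu\sb\a\bigr\}
=-\aaa(|\diff\ua|)\,\lp\sb\infty\ua-|\diff\ua|\sp2\lp\ua+\lp\sb\infty\ua,
\]
which is exactly $-[\aaa(|\diff\ua|)-1]\lp\sb\infty\ua-|\diff\ua|\sp2\lp\ua$, as claimed.

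The one step that requires some care is the continuous extension at $p=0$: it depends on knowing that $\aaa$ is continuous on all of $\Rhc$ (not merely on $\Rho$), which is precisely what the definition \eqref{aalfa} and the condition $\fa'(0)=0$ ensure. Everything else is bookkeeping around Lemma \ref{correctedViscousFormForGeneralWithModulus}.
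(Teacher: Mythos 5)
Your proof is correct and follows the same route as the paper's: the paper simply defines the monotone operator $\Aa(\vv)=\fa'(|\vv|)\vv/|\vv|$ (extended by $0$ at the origin, which is legitimate precisely because $\fa'(0)=0$), observes that $\ua$ weakly solves $\dv(\Aa(\diff u))=0$, and invokes Lemma~\ref{correctedViscousFormForGeneralWithModulus} with $M=\Aa$. You have merely spelled out the verification of the lemma's hypotheses and the algebra identifying $\dfrac{|p|^3}{|M(p)|}\np M(p)=[\aaa(|p|)-1]\,p\otimes p+|p|^2 I$, details the paper leaves implicit.
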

\begin{proof}
It is easy to see that --- owing to \eqref{hp:regularityInTheLargeSet}
--- the Euler-Lagrange equation for the functional
\eqref{eq:approxFunctionalForU} is
  \begin{equation}
    \label{eq:eulerLagrangeForUp}
    \dv\big(\Aa(\diff u)\big)=0,
  \end{equation}
where $\fun{\Aa}{\Rn}{\Rn}$ is
the monotone operator defined by
\begin{equation}
  \label{def:Aa}
  \Aa(\vv) = \begin{cases}
                 \fa'(|\vv|)\dfrac{\vv}{|\vv|} & \vv\neq0,\\
                 0                       & \vv=0.
           \end{cases}
\end{equation}
(See also Lemma \ref{bridgeToMinimizers}.)
Then apply Lemma \ref{correctedViscousFormForGeneralWithModulus}, with
$M = \Aa$.
\end{proof}

 When $N=2$ (and $\Omega$ is simply connected), we can always define a
 stream function for each $\ua$.
 \begin{thm}
\label{th:va}
   Let $\Omega\subset\Rr$ be simply connected and let the assumptions
   of Theorem \ref{viscosityApproxForU} be in force. 
\par
Then the
   following assertions hold:
 \par
(i) for every $\adom$ there exists a
     unique Lipschitz continuous solution in the sense 
     of distributions $\va$ of the system
     \begin{equation}\label{eq:defOfVp}
            \partial\sb{x}\va =-\fa'(\gm{\ua})\frac{\partial\sb{y}\ua}{\gm{\ua}}\,, \qquad
            \partial\sb{y}\va =\fa'(\gm{\ua})\frac{\partial\sb{x}\ua}{\gm{\ua}}\,,
%           \va{}\sb{,x} =-\fa'(\gm{\ua})\frac{\ua{}\sb{,y}}{\gm{\ua}}\,, \qquad
%           \va{}\sb{,y} =\fa'(\gm{\ua})\frac{\ua{}\sb{,x}}{\gm{\ua}}\,,
     \end{equation}
     such that 
 \begin{equation}
       \label{eq:vpNullMean}
       \int\sb{\Omega}\va = 0;
     \end{equation}
\par
(ii)
if $\dfrac{\rho\fa''(\rho)}{\fa'(\rho)}$
     converges to a positive constant as $\rho\to 0^+$, then $\va$ is a viscosity
     solution of 
     \begin{equation}
\label{eq:va}
       -\big[1-\aaa(\ga'(\gm{v}))\big]\lp\sb\infty v - %
         \gm{v}^2\aaa(\ga'(\gm{v}))\lp v = 0,
     \end{equation}
     in $\Omega,$ where $\aaa$ is given by \eqref{aalfa} and $\ga'$ is
     the inverse function of $\fa'$.
 
 \end{thm}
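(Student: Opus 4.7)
The plan is to handle existence/uniqueness (part (i)) and the viscosity characterization (part (ii)) separately.

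For part (i), the key idea is that the right-hand sides of \eqref{eq:defOfVp} define the components of a distributional $1$-form that is closed precisely because $\ua$ satisfies \eqref{eq:eulerLagrangeForUp}. Set
\begin{equation*}
W\sb\a = \bigl(-\Aa\sp{2}(\diff\ua),\;\Aa\sp{1}(\diff\ua)\bigr),
\end{equation*}
where $\Aa$ is the operator in \eqref{def:Aa}; the hypothesis $\fa'(0)=0$ allows $W\sb\a$ to be extended continuously across the critical set of $\ua$. The computation $\partial\sb{y}W\sb{\a}\sp{1}-\partial\sb{x}W\sb\a\sp{2}=-\dv\bigl(\Aa(\diff\ua)\bigr)=0$ in $\mathcal{D}'(\Omega)$ is exactly \eqref{eq:eulerLagrangeForUp}, so $W\sb\a$ is curl-free. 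Simple connectivity of $\Omega$ then produces $\va$ with $\diff\va=W\sb\a$, unique up to an additive constant that is fixed by \eqref{eq:vpNullMean}. Lipschitz regularity follows from $\gm{\va}=\fa'(\gm{\ua})\le\fa'(Q)$, where $Q$ is the BSC bound from Proposition \ref{basicFactsAboutApproxForU}.

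For part (ii), the plan is to apply Lemma \ref{correctedViscousFormForGeneralWithModulus} to the monotone operator
\begin{equation*}
\Ba(\vv)=\begin{cases}\ga'(|\vv|)\,\vv/|\vv|, & \vv\ne 0,\\ 0, & \vv=0,\end{cases}
\end{equation*}
with $\ga'=(\fa')^{-1}$. Using the Fenchel identity $\ga'\circ\fa'=\mathrm{id}$ on $\Rhc$, a short computation starting from the definition of $\va$ in part (i) yields $\Ba(\diff\va)=(-\partial\sb{y}\ua,\,\partial\sb{x}\ua)$ in $\Omega$; the Schwarz identity $\partial\sb{y}\partial\sb{x}\ua=\partial\sb{x}\partial\sb{y}\ua$ in $\mathcal{D}'(\Omega)$ then gives $\dv(\Ba(\diff\va))=0$, so $\va$ is a Lipschitz weak solution of the Euler-Lagrange equation for $\ka$.

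To invoke Lemma \ref{correctedViscousFormForGeneralWithModulus} I would compute
\begin{equation*}
\dfrac{|\vv|^3}{|\Ba(\vv)|}\,\partial\sb{j}\Ba\sp{i}(\vv)=\bigl(\bba(|\vv|)-1\bigr)\vv\sb{i}\vv\sb{j}+|\vv|^2\de\sb{ij},
\end{equation*}
where $\bba(\rho)=\rho\ga''(\rho)/\ga'(\rho)$; Fenchel duality again yields the identity $\bba(r)=1/\aaa(\ga'(r))$, so the hypothesis of part (ii) that $\aaa(0\sp+)>0$ is precisely what makes the displayed mapping continuous at $\vv=0$. The lemma then gives that $\va$ is a viscosity solution of
\begin{equation*}
\bigl(1-\bba(\gm{v})\bigr)\lp\sb\infty v-\gm{v}^2\lp v=0,
\end{equation*}
which, after multiplication by the positive continuous function $\aaa(\ga'(\gm{v}))$ (a rescaling that preserves viscosity solutions), coincides with \eqref{eq:va}. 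The main obstacle I anticipate is exactly this continuity check at the origin, where the naive formulas for $\Ba$ and $\J\Ba$ degenerate --- the strict positivity $\aaa(0\sp+)>0$ is the essential ingredient that rescues the step and makes the reduction to Lemma \ref{correctedViscousFormForGeneralWithModulus} go through.
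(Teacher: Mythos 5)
Your proposal is correct and follows essentially the same route as the paper: part (i) reduces to the observation that the Euler--Lagrange equation $\dv\big(\Aa(\diff\ua)\big)=0$ makes the relevant $1$-form closed, so that simple connectivity yields a primitive $\va$ with the gradient bound $\gm{\va}=\fa'(\gm{\ua})\le\fa'(Q)$; part (ii) recognizes $\va$ as a weak solution of $\dv\big(\Ba(\diff v)\big)=0$ with $\Ba=\Aa^{-1}$ and applies Lemma \ref{correctedViscousFormForGeneralWithModulus} together with the Fenchel identity $\dfrac{r\ga''(r)}{\ga'(r)}=\dfrac{1}{\aaa(\ga'(r))}$. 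Your explicit computation $\Ba(\diff\va)=(-\partial_y\ua,\partial_x\ua)$ and the final multiplication by the positive continuous function $\aaa(\ga'(\gm{v}))$ merely make visible two steps the paper leaves implicit by writing ``the proof follows the lines of'' the Lemma.
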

 \begin{proof}
(i)  As a minimizer of the differentiable functional $\ja$, $\ua$ is a weak solution
     of the corresponding Euler-Lagrange equation. The latter statement corresponds
     to saying that
     for any $\fhi\in\wqcs$ for $q\ge2$, it holds that
     \begin{displaymath}
       \label{eq:eulerLagrangeForUpNew}
       \int\sb\Omega\langle \Aa\big(\diff\ua(x)\big), \diff\fhi (x) \rangle dx = 0,
     \end{displaymath}
     where $\Aa$ is defined in \eqref{def:Aa}.
     The previous equation %\eqref{eq:eulerLagrangeForUpNew}
     may be interpreted as the following
differential form
     %\begin{subequations}
     \begin{equation*}
       \omega_\a= \begin{cases}
             \dfrac{\fa'(|\diff\ua|)}{|\diff\ua|}%
               (-\partial\sb{y}\ua{}\,dx + \partial\sb{x}\ua{}\, dy) %
               & |\diff\ua|\neq0,\\
             0 %
               & \mathrm{elsewhere}.
             \end{cases}
     \end{equation*}
     being closed, as a form belonging to
     $L\sp{q}(\Omega)$, for $q\ge2$.\cri
     We required the domain $\Omega$ to be simply connected. Thus
     (see ~\cite[Lemma 3.2.1]{Schwarz}) we can
     integrate $\omega_\a$ to obtain a function $\va\in\wq$ such that
     \eqref{eq:defOfVp} holds.
The function $\va$ is not completely defined by the
     condition \eqref{eq:defOfVp}: we are left with the choice of a
     constant to add. We choose such costant so that \eqref{eq:vpNullMean} holds.
   %\end{subequations}
   \par
We observe that since \eqref{BdGrads} holds and 
$|\diff\va| = \fa'(|\diff\ua|)$, we can conclude
\begin{equation}
\label{BdGradsv}
|\diff\va|\le\fa'(Q)\ \mbox{ in } \ \Omega.
\end{equation}
\par
(ii)   Since (i) holds, $\va$ is a weak
       solution of
       \begin{equation*}
         \label{eq:correctedFormForVp}
         \dv\big(\Ba(\diff v)\big)=0
       \end{equation*}
       where $\Ba(p)$ is the monotone operator
       \begin{equation*}
         \label{def:B}
         \Ba(\vv) = \begin{cases}
           \ga'(|\vv|)\dfrac{\vv}{|\vv|} & \vv\neq0\\
           0                       & \vv=0,
         \end{cases}
       \end{equation*}
       which happens to be the inverse of $\Aa$.

       Then, owing to the stated assumptions on $\aaa$, the proof
       follows the lines of that of Lemma
       \ref{correctedViscousFormForGeneralWithModulus}, by observing
       that, due to the fact that $\ga'$ is the inverse of $\fa'$, then
       \begin{equation*}
         \dfrac{r\ga''(r)}{\ga'(r)}= \frac1{\aaa(\ga'(r))}.
       \end{equation*}
 \end{proof}

%  \begin{thm}
%    Let $\Omega\subset\Rn$ and $\fun{\psi}{\partial\Omega}{\R}$ satysfy
%    a \bsc{} with constant $Q>0$. \\
%    Under the assumptions of Theorem \ref{viscosityApproxForU}, suppose
%    that the sequence $(\aaa)\sb{\adom}$ converges to $\aaalim$
%    uniformly on every compact subset of $\Rho$. \\
%  \end{thm}

\par
Now, we want to take the limit in \eqref{eq:defOfVp}.

\begin{thm}\label{mainResultForSys}
Let \eqref{bsc} be in force and assume that $(\fa)\sb{\adom}$ is a sequence of strictly convex functions
converging uniformly to $f$ on $\Rhc$.  
\par
Let $u$
(resp. $\ua$) be the unique solution of \eqref{bsc}-\eqref{varpb} for $\jj$
(resp. for $\ja$ in \eqref{eq:approxFunctionalForU}) and let $\va$ be defined as in
Theorem \ref{th:va}. Also, assume that the gradients $\na\ua$ converge to $\na u$
almost everywhere in $\Omega.$
\par
Then $(\va)\sb{\adom}$ contains a subsequence which converges uniformly on 
$\overline{\Omega}$ to a function $v\in W^{1,\infty}(\Omega)$ and the pair $(u,v)$ satisfies
the system \eqref{invstream} almost everywhere in $\Omega.$
\end{thm}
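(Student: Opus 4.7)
The plan is to proceed in three substantive steps: extract a uniformly convergent subsequence of $(\va)$, pass to the limit in the stream system \eqref{eq:defOfVp} on the set $\{\gm{u}>0\}$, and separately control $\gm{v}$ on the complementary set $\{\gm{u}=0\}$.

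Proposition~\ref{basicFactsAboutApproxForU} together with the \bsc{} yields $\gm{\ua}\le Q$ uniformly; by \eqref{BdGradsv} this gives $\gm{\va}\le\fa'(Q)$, and since the $\fa$'s are convex and converge uniformly to $f$ on $\Rhc$ we have $\fa'(Q)\to f'(Q)$. Hence $(\va)$ is uniformly Lipschitz, and combined with the zero-mean normalization \eqref{eq:vpNullMean} and Poincar\'e's inequality, also uniformly bounded in $L^\infty$. Ascoli-Arzel\`a then extracts a subsequence $\va\to v$ uniformly on $\overline{\Om}$, with $v\in W^{1,\infty}(\Om)$ and $\diff\va\rightharpoonup^{*}\diff v$ in $L^\infty(\Om;\Rr)$.

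On $\{\gm{u}>0\}$, at a.e.\ such point $x$ the hypothesis $\diff\ua\to\diff u$ puts $\gm{\ua(x)}$ eventually inside a compact subinterval of $\Rho$; uniform convergence of the convex $\fa\to f$ is automatically uniform in $C^{1}$ on such subsets, so the right-hand sides of \eqref{eq:defOfVp} converge pointwise to the corresponding expressions with $f$ and $u$, and dominated convergence upgrades this to $L^p$ convergence on $\{\gm{u}>0\}$. Comparison with the weak* limit identifies $\diff v$ a.e.\ there and yields \eqref{stream}. Since then $\gm{v}=f'(\gm{u})>f'(0)$, Fenchel duality gives $g'(\gm{v})=\gm{u}$, and an algebraic substitution converts \eqref{stream} into \eqref{invstream} on this set.

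On $\{\gm{u}=0\}$ both sides of \eqref{invstream} must vanish: the left-hand side is zero because $\diff u=0$ a.e.\ there, and the right-hand side vanishes provided $\gm{v}\le f'(0)$ a.e., which forces $g'(\gm{v})=0$ by the explicit shape of the Fenchel conjugate. To establish this bound, for any measurable $E\subset\{\gm{u}=0\}$ and $\delta>0$ we split
\begin{equation*}
\int_E\gm{\va}\,dx=\int_{E\cap\{\gm{\ua}<\delta\}}\fa'(\gm{\ua})\,dx+\int_{E\cap\{\gm{\ua}\ge\delta\}}\fa'(\gm{\ua})\,dx.
\end{equation*}
Monotonicity of $\fa'$ bounds the first integrand by $\fa'(\delta)\to f'(\delta)$, while the second integration domain has measure tending to zero (because $\gm{\ua}\to 0$ a.e.\ on $E$) with uniformly bounded integrand. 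Sending $\delta\to 0^{+}$ yields $\limsup_n\int_E\gm{\va}\,dx\le f'(0)|E|$; combined with $\diff\va\rightharpoonup^{*}\diff v$, this produces $\big|\int_E\xi\cdot\diff v\,dx\big|\le f'(0)|\xi||E|$ for every $\xi\in\Rr$, whence $\gm{v}\le f'(0)$ a.e.\ on $\{\gm{u}=0\}$. This last step is the main obstacle: assumption \eqref{hp:regularityInTheLargeSet} of the stream-function construction forces $\fa'(0)=0$ while $f'(0)>0$, so $\fa'$ does not converge uniformly near the origin and one cannot simply take a pointwise limit in $\gm{\va}=\fa'(\gm{\ua})$ where $\gm{\ua}\to 0$; the truncation above circumvents this discontinuity through the monotonicity of $\fa'$ and the measure-vanishing of $\{\gm{\ua}\ge\delta\}$.
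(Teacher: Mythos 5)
Your proof is correct, but it runs on a genuinely different engine than the paper's. The paper introduces the coupled Fenchel functional $\cF(u,v)=\int_\Omega\{f(\gm{u})+g(\gm{v})-(u_x v_y-u_y v_x)\}\,dxdy$ and its approximations $\cF_\a$, observes that $\cF_\a(\ua,\va)=0$ identically (this is exactly Fenchel equality and orthogonality built into \eqref{eq:defOfVp}), and passes to the limit: the cross term converges by the a.e.\ convergence of $\na\ua$ paired with the weak convergence of $\na\va$, while $\liminf\int\ga(\gm{\va})\ge\int g(\gm{v})$ by convexity and lower semicontinuity. Since the pointwise integrand of $\cF(u,v)$ is nonnegative by the Fenchel--Young and Cauchy--Schwarz inequalities, $\cF(u,v)\le 0$ forces it to vanish a.e., and equality in Fenchel--Young plus orthogonality is precisely \eqref{invstream}. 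That argument is global: it never splits $\Omega$, and the potential degeneracy on $\{\gm{u}=0\}$ is absorbed automatically by the fact that $g'\equiv 0$ on $[0,f'(0)]$ is built into the Fenchel conjugate. Your route instead passes to the limit locally on $\{\gm{u}>0\}$ (using the $C^1$-convergence of convex functions on compacts in $\Rho$ and identifying the weak* limit of $\na\va$), converts \eqref{stream} into \eqref{invstream} there by Legendre duality, and then handles $\{\gm{u}=0\}$ by the truncation estimate showing $\gm{v}\le f'(0)$ a.e., so that both sides of \eqref{invstream} vanish. Your version is more explicit about where the degeneracy lives and isolates the bound $\gm{v}\le f'(0)$ on the stagnation set, which is a useful piece of information in its own right; the paper's version is shorter once the functional $\cF$ is in hand and avoids any case analysis. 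Both are sound; the paper in fact credits the $\cF$-device to Talenti in its acknowledgements, which suggests it was regarded as the nonobvious idea here.
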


\begin{proof}
Consider the functionals
\begin{eqnarray*}
&& \cF(u,v)=\ints_\Omega \{f(\gm{u})+g(\gm{v})-(u_x\, v_y-u_y\, v_x)\}\, dx dy, \\
&& \cF_\a(u,v)=\ints_\Omega \{\fa(\gm{u})+\ga(\gm{v})-(u_x\, v_y-u_y\, v_x)\}\, dx dy.
\end{eqnarray*}
From \eqref{eq:defOfVp} it is clear that $\cF_\a(\ua,\va)=0.$
\par
Now, observe that, by \eqref{BdGradsv} and the
uniform convergence of the $\ga$'s, the gradients of the $\va$'s are uniformly
bounded and, since \eqref{eq:vpNullMean} holds for every $\a\in\NN,$ the $\va$'s
satisfy on $\overline{\Omega}$ the assumptions of Ascoli-Arzel\`a's theorem. Thus, 
$(\va)\sb{\adom}$ contains a subsequence (that we will still denote by $(\va)\sb{\adom}$) 
which converges uniformly on 
$\overline{\Omega}$ to a function $v\in W^{1,\infty}(\Omega)$ and,
by the boundedness of $(\va)\sb{\adom}$ in $W^{1,\infty}(\Omega),$ we can
always assume that $(\va)\sb{\adom}$ weakly converges to $v$ in any 
$W^{1,p}(\Omega), \, p>1.$
The latter property implies that
$$
\lim_{n\to\infty}\ints_\Omega (\pa_x\ua\, \pa_y\va-\pa_y\ua\,\pa_x\va)\}\, dx dy=
\ints_\Omega (u_x\, v_y-u_y\, v_x)\}\, dx dy
$$
--- since the gradients $\na\ua$ are bounded and are assumed to converge a.e. to $\na u$ ---
and also that
$$
\liminf_{n\to\infty}\ints_\Omega \ga(\gm{\va})\, dx dy\ge \ints_\Omega g(\gm{v})\, dx dy,
$$
by the uniform 
convergence of $\ga$ to $g$ and the convexity of $g.$ 
Therefore, we can infer that
$$
\cF(u,v)\le\liminf_{n\to\infty}\cF_\a(\ua,\va)=0.
$$
\par
On the other hand, by the very definition \eqref{eq:fenchel} of $g,$ the following inequalities hold almost everywhere in $\Omega$:
$$
f(\gm{u})+g(\gm{v})-(u_x\, v_y-u_y\, v_x)\ge \gm{u}\,\gm{v}-(u_x\, v_y-u_y\, v_x)\ge 0;
$$
thus, $\cF(u,v)=0$ and hence
$$
f(\gm{u})+g(\gm{v})-(u_x\, v_y-u_y\, v_x)=\gm{u}\,\gm{v}-(u_x\, v_y-u_y\, v_x)= 0
$$
almost everywhere in $\Omega.$ These couple of equalities then yield \eqref{invstream}
at once.
\end{proof}
\begin{rmk}
The a.e. convergence of the gradients $\na\ua$ assumed in Theorem \ref{mainResultForSys} can be
obtained at least in two fashions. 
\par
The former consists in applying either \cite[Thm. 1]{evansGariepyWeakConv} or
\cite[Thm. 2, pg. 21]{evansWeakConvergence} (by possibly restricting our assumptions on $f$).
\par
The latter consists in an adaptation of the arguments used in \cite{magnaniniTalenti2}: since 
$\ua$ is a solution of \eqref{eq:apprnondiv}, it also satisfies the Bernstein's inequality
\begin{equation*}
   |\na^2 \ua|^2 \le 
     -\det(\na^2 \ua)\,\big[\aaa(\gm{\ua}) + \aaa(\gm{\ua})^{-1}\bigl],
  \end{equation*}
which, by the properties of the coefficients $\aaa,$  implies the bound
$$
\ints_\Omega \aaalim(\gm{\ua})\,|\na^2 \ua|^2\, dx dy\le C_K,
$$
where $K$ is any compact subset of $\Omega$ and $C_K$ is a constant depending on $K.$
\par
This last inequality provides the expected compactness of the sequence $(\ua)_{\a\in\NN}.$ 
\end{rmk}

\medskip
We conclude this section by showing that the functions $u$ and $v$ determined in
Theorem \ref{mainResultForSys} are solutions of second-order degenerate elliptic equations.
 
\begin{thm}\label{mainResultForU}
   Assume the \bsc{} \eqref{bsc} is in force and let $\f$ satisfy \eqref{eq:cond:f} and
   \eqref{def:alphaToInfty}.
\par
   Then the solution $u$ of problem \eqref{varpb} is a viscosity
   solution of
   \begin{equation}
     -\big[\aaalim(\gm{u})-1\big]\lp\sb\infty u - %
       \gm{u}^2\lp u =0\quad\text{ in }\Omega,
   \end{equation}
where $\aaalim$ is given by \eqref{alphaIntro}.
  \end{thm}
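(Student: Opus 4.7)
The plan is to obtain the conclusion by the same approximating scheme exploited throughout Section \ref{approx}: combine Theorem \ref{viscosityApproxForU} with the uniform convergence granted by Proposition \ref{basicFactsAboutApproxForU}, and pass to the limit via the standard stability theorem for viscosity solutions (\cite[Sec.~6]{viscosityUserGuide}).

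First, I would exhibit an approximating sequence $(\fa)\sb{\adom}$ that simultaneously fulfills the hypotheses of Proposition \ref{basicFactsAboutApproxForU} and of Theorem \ref{viscosityApproxForU}. A convenient choice is
\begin{equation*}
  \fa(\rho)=f(\rho)-\frac{\fb(0)}{n}\bigl(1-e^{-n\rho}\bigr),\qquad \rho\ge 0,
\end{equation*}
whence $\fa'(\rho)=\fb(\rho)-\fb(0)e^{-n\rho}$ and $\fa''(\rho)=f''(\rho)+n\fb(0)e^{-n\rho}>0$. One then reads off $\fa'(0)=0$, the strict convexity and $C^{2,\lambda}\sb{\mathrm{loc}}$ regularity of $\fa$ inherited from $f$, and the bound $\|\fa-f\|\sb{L^\infty([0,\infty))}\le \fb(0)/n\to 0$. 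Thus $(\fa)\sb{\adom}$ satisfies \eqref{eq:cond:fConvex}--\eqref{eq:cond:fRegularity} together with \eqref{hp:regularityInTheLargeSet}, and converges uniformly to $f$ on $\Rhc$.

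Second, I would check that the coefficient $\aaa(\rho)$ of the PDE solved by $\ua$ converges to $\aaalim(\rho)$ in a way compatible with viscosity stability. A direct computation gives
\begin{equation*}
  \aaa(\rho)=\frac{\rho\bigl[f''(\rho)+n\fb(0)e^{-n\rho}\bigr]}{\fb(\rho)-\fb(0)e^{-n\rho}},
\end{equation*}
and the elementary estimate $ne^{-n\delta}\to 0$ implies $\aaa\to\aaalim$ uniformly on every interval $[\delta,Q]$ with $0<\delta<Q$. Since the \bsc{} yields the uniform Lipschitz bound $\gm{\ua}\le Q$ through \eqref{BdGrads}, only convergence on $(0,Q]$ is actually needed for testing the viscosity property against smooth functions with non-vanishing gradient at the contact point.

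Finally, Theorem \ref{viscosityApproxForU} asserts that each $\ua$ is a viscosity solution in $\Omega$ of
\begin{equation*}
  -\bigl[\aaa(\gm{u})-1\bigr]\lp\sb{\infty}u-\gm{u}^2\lp u=0,
\end{equation*}
while Proposition \ref{basicFactsAboutApproxForU}\eqref{uApproxApproxU} provides $\ua\to u$ uniformly on $\overline{\Omega}$. Applying the standard viscosity-stability theorem then yields that $u$ is a viscosity solution of the limit equation, as claimed. The delicate point — which I expect to be the main obstacle — is the behavior at the critical set $\{\gm{u}=0\}$: the operators $F\sb n(p,X)=-[\aaa(|p|)-1]\langle Xp,p\rangle-|p|^2\tr X$ are discontinuous at $p=0$, and the stability result must be invoked with the upper and lower semicontinuous envelopes of the limit operator. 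Fortunately, \eqref{eq:cond:fPrime} forces $\aaalim(0)=0$, so both envelopes of $F$ at $p=0$ collapse to $F(0,X)=0$ for every $X$, and the stability argument — following the template of \cite{magnaniniTalenti2} for the case \eqref{deff} — goes through without further modification.
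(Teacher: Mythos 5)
Your scheme — approximate $f$ by smooth $\fa$'s with $\fa'(0)=0$, apply Theorem~\ref{viscosityApproxForU} to each $\ua$, use Proposition~\ref{basicFactsAboutApproxForU} to get $\ua\to u$ uniformly, and invoke the stability lemma of \cite{viscosityUserGuide} — is exactly the paper's strategy, and your explicit choice of $\fa$ is a perfectly reasonable candidate. The gap is in the passage to the limit near the critical set, where your reasoning misidentifies the difficulty.

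First, the operators $F\sb n(p,X)=-[\aaa(|p|)-1]\langle Xp,p\rangle -|p|^2\tr X$ and the limit operator $F$ are \emph{not} discontinuous at $p=0$: $\aaa$ is, by construction \eqref{aalfa}, continuous at $0$ (its value there is the one-sided limit), and both quadratic forms $\langle Xp,p\rangle$ and $|p|^2\tr X$ vanish as $p\to0$, so $F\sb n(\cdot,X)$ and $F(\cdot,X)$ extend continuously to $p=0$ with value $0$. The true issue is \emph{locally uniform convergence} $F\sb n\to F$, which is what \cite[Lemma~6.1]{viscosityUserGuide} requires; this is precisely the paper's condition (b), namely that $\aaa(|p|)p\sb ip\sb j\to\aaalim(|p|)p\sb ip\sb j$ uniformly on compact subsets of $\RN$, \emph{including a neighbourhood of} $p=0$. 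Your verification that $\aaa\to\aaalim$ uniformly on $[\delta,Q]$ with $\delta>0$ does not cover this: along $\rho\sb n=c/n$ one finds $\aaa(\rho\sb n)\to ce^{-c}/(1-e^{-c})\in(0,1)$ while $\aaalim(\rho\sb n)\to0$, so the difference $\aaa-\aaalim$ does not shrink near $0$; what saves the day (if anything) is the factor $|p|^2$, but for that one needs $\aaa$ to be bounded near $0$ uniformly in $n$. With your $\fa$ one can show $\aaa(\rho)\le\max\bigl(\rho f''(\rho)/[f'(\rho)-f'(0)],\,1\bigr)$, so the required bound holds if and only if $\rho f''(\rho)/[f'(\rho)-f'(0)]$ is bounded near $0$ — a property not guaranteed by \eqref{eq:cond:f} and \eqref{def:alphaToInfty}, since $f''$ is only assumed $C^{0,\lambda}\sb{\mathrm{loc}}((0,\infty))$ and may blow up at the origin. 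You would need to either add that hypothesis or switch to an approximation that trivially satisfies (b): the paper's choice $\fa'=\multa^a f'$ with $\multa^a(\rho)=1-(1-\rho^{1/n})^n$ yields the clean identity $\aaa-\aaalim=\rho(\multa^a)'/\multa^a\le 1/n$ on $[0,1]$ and $=0$ for $\rho\ge1$, so $\aaa\to\aaalim$ uniformly on $\Rhc$ regardless of the fine behaviour of $f''$ near $0$.

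In short: same route, but you should replace the loose remarks on envelopes by an explicit verification of the paper's condition (b) (or an equivalent relaxed-limit argument), and note that your particular $\fa$ may require an extra hypothesis on $f$ near $0$, whereas the multiplicative cutoff $\multa^a$ does not.
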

 \begin{proof}
   We can always approximate $\f$ by a sequence of lagrangeans $\fa$
   such that: 
\begin{itemize}
\item[(a)] the $\fa$'s converge to $\f$ uniformly on 
     $\Rhc$; 
\item[(b)] the functions $\aaa(|p|)p\sb{i}p\sb{j}$, with
     $\aaa$ given by \eqref{aalfa}, converge to
     $\aaalim(|p|)p\sb{i}p\sb{j}$, $i,j=1\ldots N$, uniformly on any
     compact subset of $\Rn$ (see also Remark \ref{practicalApprox}).
\end{itemize}
\par
 By Proposition \ref{basicFactsAboutApproxForU}, the solutions
     $\ua$ of the minimum problem \eqref{varpb} rephrased in terms of
     $\ja$ converge to $u$ uniformly in $\overline{\Omega}$. The
     conclusion then follows from
     ~\cite[Lemma 6.1]{viscosityUserGuide}.
 \end{proof}

 \begin{thm}\label{mainResultForV}
   Let $\f$ satisfy \eqref{eq:cond:f} and
   \eqref{def:alphaToInfty} and suppose $f$ is approximated, uniformly on $[0,\infty),$
by a sequence of functions $\fa$ which obey \eqref{eq:cond:fConvex}, \eqref{eq:cond:fRegularity}
and \eqref{hp:regularityInTheLargeSet}.
\par
Assume that the functions $\aaa$ satisfy the assumptions of 
Theorem \ref{th:va}  and that $\aaa\circ g'_n$ converge to a function $\beta$
uniformly on the compact subsets of $\Rhc$. 
\par  
Then the sequence $(\va)\sb{\adom}$ contains a subsequence that
   converges uniformly in $\Omega$ to a function a
   viscosity solution $v$ of
   \begin{equation}
\label{eq:limpV}
 -\big[1-\beta(|\na v|)\big]\,\Delta_\infty v - 
       |\na v|^2\,\beta(|\na v|)\,\Delta v=0 \qquad\text{ in }\Omega.
   \end{equation}
 \end{thm}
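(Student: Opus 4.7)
The plan is to combine Theorem \ref{th:va}(ii), which identifies each $v\sb{\a}$ as a viscosity solution of a specific second order degenerate elliptic equation, with the standard stability result for viscosity solutions (e.g.\ \cite[Lemma 6.1]{viscosityUserGuide}), in the same spirit as the proof of Theorem \ref{mainResultForU}. The three ingredients to check are: uniform $L\sp{\infty}$ bounds on $(\va)\sb{\adom}$ and $(\diff\va)\sb{\adom}$, uniform convergence of a suitable subsequence, and locally uniform convergence of the coefficients of the equations.

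First, I would extract a uniformly convergent subsequence. By the \bsc{} we have $\gm{\ua}\le Q$ on $\overline{\Omega}$ (Proposition \ref{basicFactsAboutApproxForU}), so the identity $\gm{\va}=\fa'(\gm{\ua})$ and the uniform convergence of $\fa'$ on the compact set $[0,Q]$ yield a uniform bound $\gm{\va}\le C$ on $\overline{\Omega}$ for $\a$ large. Together with the normalization \eqref{eq:vpNullMean}, this gives a uniform bound on $\|\va\|\sb{L\sp\infty}$ too; Ascoli-Arzel\`a then furnishes a subsequence, still denoted $(\va)\sb{\adom}$, that converges uniformly on $\overline{\Omega}$ to some $v\in W\sp{1,\infty}(\Omega)$.

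Second, by Theorem \ref{th:va}(ii) each $\va$ is a viscosity solution in $\Omega$ of
\begin{equation*}
-\bigl[1-\aaa(\ga'(\gm{v}))\bigr]\lp\sb{\infty}v - \gm{v}\sp{2}\aaa(\ga'(\gm{v}))\lp v=0,
\end{equation*}
which may be recast as $F\sb{\a}(\diff v,\Hess v)=0$ where the coefficients of the matrix acting on $\Hess v$ are, up to sign, $[1-\aaa(\ga'(\gm{p}))]p\sb{i}p\sb{j}+\gm{p}\sp{2}\aaa(\ga'(\gm{p}))\de\sb{ij}$. The hypothesis that $\aaa\circ\ga'\to\beta$ uniformly on compact subsets of $\Rhc$ ensures that $F\sb{\a}\to F$ locally uniformly on $\Rr\times\sym\sp{2}$, where $F$ is the left-hand side operator of \eqref{eq:limpV}. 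Hence the standard stability result for viscosity solutions applies to the subsequence above and yields that $v$ is a viscosity solution of \eqref{eq:limpV} in $\Omega$.

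The main obstacle I expect is the verification that the hypotheses of \cite[Lemma 6.1]{viscosityUserGuide} are genuinely satisfied: one needs continuity and degenerate ellipticity of the limiting $F$ near critical points (where the original divergence-form equation is singular), and one needs the locally uniform convergence $F\sb{\a}\to F$ to hold, including at $p=0$. These are precisely the reasons for which the equation had to be pre-multiplied by the factor $\gm{u}\sp{3}/|M(\diff u)|$ in Lemma \ref{correctedViscousFormForGeneralWithModulus}, and for which the limit $\beta$ is assumed to arise via uniform convergence of the compositions $\aaa\circ\ga'$ rather than of $\aaa$ alone; once these points are verified, the conclusion is immediate.
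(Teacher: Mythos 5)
Your proposal matches the paper's own proof: extract a uniformly convergent subsequence of $(\va)$ via the gradient bound \eqref{BdGradsv} (which gives $\gm{\va}=\fa'(\gm{\ua})\le\fa'(Q)$, uniformly bounded) together with the normalization \eqref{eq:vpNullMean} and Ascoli--Arzel\`a; identify each $\va$ as a viscosity solution of \eqref{eq:va} by Theorem \ref{th:va}(ii); and pass to the limit with the stability result \cite[Lemma 6.1]{viscosityUserGuide}, using the assumed locally uniform convergence $\aaa\circ\ga'\to\beta$ to obtain locally uniform convergence of the operators. The paper's proof is a terser version of exactly this argument, and the ``main obstacle'' you flag (degenerate ellipticity and convergence at $p=0$) is likewise left implicit there, so it does not constitute a gap relative to the paper's own standard of detail.
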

 \begin{proof}
   By Theorem \ref{th:va},  each $\va$ satisfies \eqref{eq:va}; \eqref{BdGradsv} and the properties of the $\fa$'s
imply that the
   sequence $(\va)\sb{\adom}$ satisfies the assumptions of Ascoli-Arzel\`a's
   theorem. Therefore the conclusion follows again by applying 
   ~\cite[Lemma 6.1]{viscosityUserGuide} to any uniformly converging
   subsequence of $(\va)\sb{\adom}$. 
 \end{proof}
 \begin{cor}\label{corollaryForV}
   If the coefficients $\aaa$ also converge to $\aaalim$ uniformly on
   every compact subset of $\Rhc$, then the function $v$ of Theorem
   \ref{mainResultForV} is a viscosity solution of 
   \begin{equation}
     \label{eq:viscosityEqForV}
     -\big[1-\aaalim(\g'(\gm{v}))\big]\lp\sb\infty v -%
       \gm{v}^2\aaalim(\g'(\gm{v}))\lp v=0\qquad\text{ in }\Omega,
   \end{equation}
   where $\g(r)=\sup\{r\rho-\f(\rho): \rho\ge0\}$.
 \end{cor}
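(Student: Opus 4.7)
The plan is to read off the corollary from Theorem \ref{mainResultForV} once one identifies the unknown coefficient $\beta$ appearing in \eqref{eq:limpV} as $\aaalim\circ\g'$. Indeed, Theorem \ref{mainResultForV} already produces a subsequence $(\va)\sb{\adom}$ converging uniformly to a viscosity solution $v$ of \eqref{eq:limpV}, where by hypothesis $\beta$ is the uniform limit on compact subsets of $\Rhc$ of the sequence $\aaa\circ\ga'$. Thus it suffices to prove
\[
  \aaa\circ\ga'\longrightarrow\aaalim\circ\g'\qquad\text{uniformly on compact subsets of }\Rhc,
\]
and then \eqref{eq:limpV} becomes \eqref{eq:viscosityEqForV}, giving the corollary at once.

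The first step is convex-analytic: from the uniform convergence $\fa\to\f$ on $\Rhc$ together with the strict convexity and $C\sp1$ regularity of the $\fa$'s and $\f$, a standard stability result for Fenchel conjugation yields that $\ga\to\g$ locally uniformly and that $\ga'\to\g'$ locally uniformly on $\Rhc$. Note that by \eqref{hp:regularityInTheLargeSet} each $\ga'$ coincides with $(\fa')^{-1}$ on $\Rhc$, while $\g'\equiv0$ on $[0,\f'(0)]$ and $\g'=(\f')^{-1}$ on $[\f'(0),+\infty)$, so $\g'$ remains continuous across the corner at $r=\f'(0)$. Consequently, for any compact $K\subset\Rhc$, the set $K'=\g'(K)\cup\bigcup\sb{\a}\ga'(K)$ is bounded and may be enclosed in a compact $K''\subset\Rhc$.

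The second step is the triangle inequality estimate
\[
  |\aaa(\ga'(r))-\aaalim(\g'(r))|\le|\aaa(\ga'(r))-\aaalim(\ga'(r))|+|\aaalim(\ga'(r))-\aaalim(\g'(r))|
\]
for $r\in K$. The first addendum is controlled by $\sup\sb{K''}|\aaa-\aaalim|$, which tends to zero by the standing hypothesis; the second vanishes thanks to uniform continuity of $\aaalim$ on the compact $K''$ combined with the uniform convergence $\ga'\to\g'$ on $K$ obtained in the previous step. Together, these estimates give the desired uniform convergence on $K$ and finish the proof.

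The main obstacle is the convex-analytic step, since the limit Lagrangian $\f$ satisfies $\f'(0)>0$ whereas the approximants satisfy $\fa'(0)=0$; hence the qualitative behaviour of $\ga'$ near $r=\f'(0)$ differs noticeably from that of $\g'$. This is handled by exploiting the stability of subdifferentials of convex functions under uniform convergence, upgrading pointwise convergence to locally uniform convergence via a Dini-type argument based on the monotonicity of the $\ga'$'s and the continuity of $\g'$. The remaining ingredients are routine.
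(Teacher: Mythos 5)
Your proposal is correct and follows essentially the same route as the paper: identify the function $\beta$ appearing in Theorem \ref{mainResultForV} as $\aaalim\circ\g'$ by passing to the limit in $\aaa\circ\ga'$, using the convergence $\ga'\to\g'$ together with the hypothesized locally uniform convergence $\aaa\to\aaalim$. The paper compresses this into a single sentence (``since $g_n'(r)$ always converges to $g'(r)$, we obtain $\beta(r)=\aaalim(g'(r))$ at once''), whereas you supply the convex-analytic justification for $\ga'\to\g'$ and the triangle-inequality argument that the paper leaves implicit; the underlying argument is the same.
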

\begin{proof}
It is enough to compute the function $\beta$. Since $g_n(r)$
always converge to $g'(r),$ we obtain that $\beta(r)=\aaalim(g'(r))$
at once.
\end{proof}
\begin{rmk}\label{practicalApprox}
   Sequences of $(\fa)_{\a\in\NN}$ that satisfy the assumptions
   mentioned in the statement of Theorem \ref{mainResultForV} can be
   constructed in various fashions. A convenient way is to modify
$f$ only in a neighborhood of $\rho=0.$
\par
   Here we give two examples; we set
   $\fa'(\rho)=\multa(\rho)\f'(\rho)$, with $\multa(\rho)\equiv1$ for
   $\rho\ge1$, while
   \begin{enumerate}[(a) ]
   \item $\multa^a(\rho)= 1-(1-\rho^{\ainv})^{\a}$ for $\rho\in[0,1]$;
   \item $\multa^b(\rho)= 1-(1-\rho^s)^{\a}$ for $\rho\in[0,1]$
   \end{enumerate}
where $s>0.$
\par
   With the obvious notations it easy to see that the $\aaa\sp{a}$'s
   converge uniformly to $\aaalim$ on $\Rhc$, so that Corollary
   \ref{corollaryForV} 
   applies. In particular, $\aaalim\circ g'\equiv 0$ on $[0, f'(0)]$
so that, for $|\na v|\le f'(0),$ \eqref{eq:viscosityEqForV} reads thus: $\Delta_\infty v=0.$
\par
The $\aaa\sp{b}$'s instead converge uniformly in the compact subsets of $(0,\infty)$
but not on those of $[0,\infty).$  Straightforward computations show that the function $v$
   of Theorem \ref{mainResultForV} satisfies \eqref{eq:limpV}, where
   \begin{equation*}
    \bbalim(r)=%
         \begin{cases}
           -s\,\dfrac{f'(0)-r}{r}\log[1-r/f'(0)] &0\le r\le f'(0),\\
           \aaalim(g'(r)) & r>f'(0).
         \end{cases}
   \end{equation*}
 \end{rmk}

\begin{acknowledgements}
The authors are indebted with Prof. G. Talenti for the many helpful
discussions and, in particular, for inspiring the use of functional
$\mathcal{F}(u,v)$ in Theorem \ref{mainResultForSys}.

% Most part of the material exposed in the present paper has been
% collected during the three year doctoral period the first author spent
% at the Department of Mathematics ``Ulisse Dini'' of the the University
% of Florence. The latter institution is acknowledged with gratitude for
% the support offered.
\end{acknowledgements}

\bibliographystyle{plain}
\bibliography{biblio}

\end{document}